\documentclass[leqno,10pt]{article}
\usepackage{amsmath}
\usepackage{amsfonts}
\usepackage{array}
\usepackage{enumerate}
\usepackage{graphicx}
\usepackage{amssymb}
\usepackage{amsthm}
\usepackage{xcolor}
\usepackage{chemarr}
\usepackage{authblk}

\newtheorem{theorem}{Theorem}
\newtheorem{proposition}{Proposition}
\newtheorem{lemma}{Lemma}

\newtheorem{definition}{Definition}

\newtheorem{remark}{Remark}
\newtheorem{example}{Example}
\newcommand{\R}{\mathbb{R}}

\newcommand{\norm}[2][\relax]{\ifx#1\relax \ensuremath{\left\Vert#2\right\Vert} \else \ensuremath{\left\Vert#2\right\Vert_{#1}}\fi}

\begin{document}

\title{Coordinate-independent singular perturbation reduction for systems with three time scales}

\author{Niclas Kruff, Sebastian Walcher\\
Mathematik A, RWTH Aachen\\
52056 Aachen, Germany
}

\affil{niclas.kruff@matha.rwth-aachen.de, walcher@matha.rwth-aachen.de}
\date{}

\maketitle
\begin{abstract}On the basis of recent work by Cardin and Teixeira on ordinary differential equations with more than two time scales, we devise a coordinate-independent reduction for systems with three time scales; thus no a priori separation of variables into fast, slow etc. is required. Moreover we consider arbitrary parameter dependent systems and extend earlier work on Tikhonov-Fenichel parameter values -- i.e. parameter values from which singularly perturbed systems emanate upon small perturbations -- to the three time-scale setting. We apply our results to two standard systems from biochemistry.
\\
{\bf MSC (2010):} 92C45, 34E15, 37D10  \\
{\bf Key words}: Reaction network, dimension reduction, invariant set, multiple time scales.\\

\end{abstract}

\section{Introduction and overview}
Ordinary differential equations involving a small parameter appear frequently in mathematics and in science. Their principal use in chemistry and biochemistry -- which is the main topic of the present paper --  is to find certain (attracting) invariant sets and to achieve reduction of dimension. The mathematical basis is singular perturbation theory, originally due to Tikhonov \cite{tikh} and Fenichel \cite{fenichel}, for systems with one small parameter $\varepsilon$  (or, in other words, for systems with two time scales). \\
While Tikhonov's and Fenichel's theory is concerned with first order approximations in $\varepsilon$, there exist approaches to include higher order terms in $\varepsilon$, e.g. to improve accuracy in the approximation of invariant manifolds; see for instance the critical survey in Kaper and Kaper \cite{kapkap}. More recently,  Noel et al. \cite{noelgvr}, Radulescu et al. \cite{rvg}, Samal et al. \cite{sgfr,sgfwr} developed an algorithmic method to compute slow-fast scenarios in chemical reaction networks, using tropical geometry.  Concerning the existence (or persistence) of  invariant sets obtained by such (a priori formal) calculations one may invoke hyperbolicity properties; for instance Theorem 4.1  in Chicone \cite{chi} is very useful in this respect. A direct method for chemical reaction networks involving different orders of a single small parameter, given certain properties of the system, is due to Cappeletti and Wiuf \cite{cawi}. \\ 
A different perspective is the consideration of systems with more than two time scales by introducing, cum grano salis, several small parameters $\varepsilon_1,\,\varepsilon_2,\ldots$, and to obtain invariant manifolds and reduction on this basis. (One has the option to set all parameters equal in the end.)

Recently Cardin and Teixeira \cite{cartex} generalized Fenichel's fundamental theorems, proving results on invariant sets and reductions of systems with more than two time scales. Here, the differential equation systems are assumed to have variables separated into blocks of fast, slow, ``very slow'' ones, and so on. \\
The present paper is based, on the one hand, on Cardin and Teixeira \cite{cartex}. On the other hand, we extend earlier work \cite{gw2,gwz} that is concerned with coordinate-independent reduction (not requiring an a priori separation of slow and fast variables), as well as with the basic question of finding -- in arbitrary parameter dependent systems -- critical parameter values from which singular perturbation reductions emanate. \\
We will focus on the three time-scale setting, essentially to keep notation manageable, and will only briefly sketch extensions to more than three time scales. Furthermore we will mostly consider systems that satisfy not only the normal hyperbolicity conditions from \cite{cartex} but have the stronger feature of exponential attractivity. One reason for this restriction lies in our interest in chemical reaction networks. But beyond this practical consideration, the algorithms to compute critical parameter values for singular perturbation scenarios indeed requires this additional property.\\
The paper is organized as follows. In Section \ref{secsep} we review the work by Cardin and Teixeira \cite{cartex}. Section \ref{secfree} generalizes the coordinate-independent reduction algorithm from \cite{gw2} to three-timescale systems. In Section \ref{seccrit} we start from a general parameter dependent system and extend the work from \cite{gwz} on critical parameter values (Tikhonov-Fenichel parameter values) to three time scales (resp. two ``small parameters''), and in Section \ref{examplesec} we discuss two classical examples (cooperativity with two complexes, competitive inhibition) from biochemistry in detail. Section \ref{secconc} contains a few remarks about more than three time scales, and finally, for the reader's convenience, we prove some essentially known facts in an Appendix.

\section{Separated fast and slow variables}\label{secsep}
In this section we review and specialize results from Cardin and Teixeira \cite{cartex} for a parameter dependent ordinary differential equation system
\begin{equation}\label{fullsys}
\begin{array}{rcl}
\dot x_1&=&\phantom{\varepsilon_1}\phantom{\varepsilon_2}f_1(x,\varepsilon_1,\varepsilon_2)\\
\dot x_2&=&\phantom{\varepsilon_1}\varepsilon_1f_2(x,\varepsilon_1,\varepsilon_2)\\
\dot x_3&=&\varepsilon_1\varepsilon_2f_3(x,\varepsilon_1,\varepsilon_2)\\
\end{array}; \quad\text{briefly  } \dot x=f(x,\varepsilon_1,\varepsilon_2).
\end{equation}
Here $x=(x_1,x_2,x_3)^{\rm tr}\in\mathbb R^n$ with $x_1\in\mathbb R^{n_1}$, $x_2\in\mathbb R^{n_2}$, and $x_3\in\mathbb R^{n_3}$, and $f$ is smooth on an open neighborhood of $U\times [0,\delta_1)\times[0,\delta_2)$, with $U\subseteq \mathbb R^n$ open and nonempty, and $\delta_1>0$, $\delta_2>0$. \\
We define 
\begin{equation}
{\cal M}_1:=\left\{ x\in U;\, f_1(x,0,0)=0\right\}
\end{equation}
and 
\begin{equation}
{\cal M}_2:=\left\{ x\in U;\, f_1(x,0,0)=f_2(x,0,0)=0\right\},
\end{equation}
and we will assume throughout that these sets are nonempty.
Cardin and Teixeira require some hyperbolicity conditions, which we state here in slightly stronger versions, for the sake of simplicity:
\begin{itemize}
\item {\em First hyperbolicity condition:}  For every $x\in {\cal M}_1$, all the eigenvalues of $D_{x_1}f_1(x,0,0)$\footnote{For a smooth function $g=g(x,y,\ldots)$ we denote the partial derivatives by $D_xg$, $D_yg$ etc. } have nonzero real parts. \\
For sufficiently small $\varepsilon_1,\varepsilon_2$ this condition implies local solvability of the implicit equation $f_1(x,\varepsilon_1,\varepsilon_2)=0$ in the form $x_1=g(x_2,x_3,\varepsilon_1,\varepsilon_2)$, and one may furthermore write 
\[
f_2(x,\varepsilon_1,\varepsilon_2)=\widetilde f_2(x_2,x_3,\varepsilon_1,\varepsilon_2):=
f_2(g(x_2,x_3,\varepsilon_1,\varepsilon_2),x_2,x_3,\varepsilon_1,\varepsilon_2).
\]
\item {\em Second hyperbolicity condition:} For every $x\in{\cal M}_2$, all the eigenvalues of $D_{x_2}\widetilde f_2(x,0,0)$ have nonzero real parts.\footnote{In \cite{cartex} the second hyperbolicity condition is erroneously written for $f_2$ rather than $\widetilde f_2$. The authors are aware of this and will publish a corrigendum.}
\end{itemize}
By suitable choice of $U$, $\delta_1$ and $\delta_2$ we may assume that ${\cal M}_1$ and ${\cal M}_2$ are submanifolds.\\
Continuing to follow \cite{cartex} we introduce the {\em auxiliary system}
\begin{equation}\label{auxsys}
\begin{array}{rcl}
0 &=&\phantom{\varepsilon_2}f_1(x,0,\varepsilon_2)\\
\dot x_2&=&\phantom{\varepsilon_1}f_2(x, 0,\varepsilon_2)\\
\dot x_3&=&\varepsilon_2f_3(x,0,\varepsilon_2)\\
\end{array}
\end{equation}
on 
\[
{\cal M}_2^{\varepsilon_2}:=\left\{ x\in U;\, f_1(x,0,\varepsilon_2)=0\right\},
\]
and the {\em intermediate reduced system}
\begin{equation}\label{interredsys}
\begin{array}{rcl}
0 &=&f_1(x,0,0)\\
\dot x_2&=&f_2(x, 0,0)\\
\dot x_3&=&0\\
\end{array}
\end{equation}
on ${\cal M}_1$.
In both equations \eqref{auxsys} and \eqref{interredsys} above the dot denotes differentiation with respect to $\tau_2:=\varepsilon_1 t$.
By suitable choice of $\delta_2$ we may also assume that every ${\cal M}_2^{\varepsilon_2}$ is a submanifold of $\mathbb R^n$.\\ Finally we define  the {\em completely reduced system}
\begin{equation}\label{compredsys}
\begin{array}{rcl}
0 &=&f_1(x,0,0)\\
0 &=&f_2(x, 0,0)\\
\dot x_3&=&f_3(x,0,0)\\
\end{array}
\end{equation}
on ${\cal M}_2$,
where the dot in \eqref{compredsys} denotes differentiation with respect to $\tau_3:=\varepsilon_1 \varepsilon_2 t$.
\\

We replace the hyperbolicity conditions from \cite{cartex} by stronger requirements, since in our applications we focus on attracting invariant manifolds. 
\begin{definition}\label{hypadef}
We say that system \eqref{fullsys} satisfies the hyperbolic attractivity condition {\em (HA)} if
$D_{x_1}f_1(x,0,0)$ has only eigenvalues with negative real part on $\mathcal M_1$ and if furthermore $D_{x_2}\widetilde f_2(x,0,0)$ has only eigenvalues with negative real part on $\mathcal M_2$.
\end{definition}
Our starting point is the following theorem, specialized from Cardin and Teixeira \cite{cartex}, Theorems A, B and Corollary A. Some of our statements are informal; for rigorous statements and pertinent definitions we refer to \cite{cartex}.
\begin{theorem}\label{splitthm} Let system \eqref{fullsys} be given, with {\em (HA)} satisfied.
\begin{enumerate}[(a)] 
\item Let $\mathcal N\subseteq \mathcal M_2$ be a compact submanifold (with nonempty interior in the relative topology, and possibly with boundary). Then for all sufficiently small $\varepsilon_1, \varepsilon_2$ there exists a locally invariant manifold $\mathcal N_{\varepsilon_1, \varepsilon_2}$ for system \eqref{fullsys} which is $O(\varepsilon_1+ \varepsilon_2)$ close to $\mathcal N$, diffeomorphic to $\mathcal N$ and locally exponentially attracting. Given the appropriate time scales, solutions of \eqref{fullsys} on  $\mathcal N_{\varepsilon_1, \varepsilon_2}$ converge to solutions of \eqref{compredsys} on $\mathcal N$.
\item Let $\varepsilon_2$ be sufficiently small and let $\mathcal L\subseteq \mathcal M_2^{\varepsilon_2}$ be a compact submanifold (with nonempty interior in the relative topology, and possibly with boundary). Then for all sufficiently small $\varepsilon_1$ there exists a locally invariant manifold $\mathcal L_{\varepsilon_1, \varepsilon_2}$ for system \eqref{fullsys} which is $O(\varepsilon_1+ \varepsilon_2)$ close to $\mathcal L$, diffeomorphic to $\mathcal L$ and locally exponentially attracting. Given the appropriate time scales, solutions of \eqref{fullsys} on  $\mathcal L_{\varepsilon_1, \varepsilon_2}$ converge to solutions of \eqref{interredsys} on $\mathcal L$.
\end{enumerate}
\end{theorem}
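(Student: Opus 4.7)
The plan is to derive Theorem \ref{splitthm} directly from Theorems A, B and Corollary A of Cardin and Teixeira \cite{cartex}, after checking that (HA) strengthens their hyperbolicity conditions to the contracting case and that the resulting normally hyperbolic invariant manifolds can therefore be upgraded to exponentially attracting ones.

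For part (b), I would fix $\varepsilon_2$ sufficiently small and read \eqref{fullsys} as a classical Tikhonov--Fenichel system in the single small parameter $\varepsilon_1$, with critical manifold $\mathcal{M}_2^{\varepsilon_2}$. By the implicit function theorem and the first hyperbolicity condition, $\mathcal{M}_2^{\varepsilon_2}$ is the graph $x_1 = g(x_2,x_3,0,\varepsilon_2)$; continuity of eigenvalues together with compactness of $\mathcal{L}$ ensures that $D_{x_1}f_1(x,0,\varepsilon_2)$ has only eigenvalues with negative real parts on $\mathcal{L}$ for all sufficiently small $\varepsilon_2$, since (HA) guarantees this at $\varepsilon_2 = 0$. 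Fenichel's theorem (equivalently, Theorem B of \cite{cartex} with $\varepsilon_2$ held fixed) then yields the invariant manifold $\mathcal{L}_{\varepsilon_1,\varepsilon_2}$ with the stated closeness, diffeomorphism and exponential attraction; on the time scale $\tau_2 = \varepsilon_1 t$, trajectories on $\mathcal{L}_{\varepsilon_1,\varepsilon_2}$ converge uniformly on compact time intervals to solutions of the auxiliary system \eqref{auxsys}, and setting $\varepsilon_2 = 0$ reduces the latter to \eqref{interredsys}, with the discrepancy absorbed in the $O(\varepsilon_1+\varepsilon_2)$ bound.

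Part (a) follows by iterating the reduction. On $\mathcal{L}_{\varepsilon_1,\varepsilon_2}$ the dynamics are close to those of the auxiliary system \eqref{auxsys}, which, viewed as a singular perturbation in $\varepsilon_2$ with slow manifold $\mathcal{M}_2$, satisfies the second hyperbolicity condition in attracting form: by (HA), the eigenvalues of $D_{x_2}\widetilde{f}_2(x,0,0)$ have strictly negative real parts on $\mathcal{N}$. Theorem A of \cite{cartex} carries out precisely this nested two-step reduction and produces $\mathcal{N}_{\varepsilon_1,\varepsilon_2}$ with the asserted properties, along with the convergence of solutions to those of \eqref{compredsys} on the time scale $\tau_3 = \varepsilon_1\varepsilon_2 t$.

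The main subtlety lies in strengthening normal hyperbolicity (the conclusion stated in \cite{cartex}) to exponential attraction. This hinges on a uniform spectral gap: because (HA) yields strictly negative real parts of the transverse linearisations at every point of the compact manifolds $\mathcal{N}$ and $\mathcal{L}$, compactness provides uniform negative bounds on these real parts, and a standard argument with adapted Lyapunov forms — or, equivalently, with uniform invariant stable splittings as in Fenichel's original work — promotes normal hyperbolicity to uniform exponential contraction on a tubular neighbourhood, a property that persists under sufficiently small perturbations in $(\varepsilon_1,\varepsilon_2)$.
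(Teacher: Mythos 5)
Your proposal is correct and follows essentially the same route as the paper, which states Theorem \ref{splitthm} as a direct specialization of Theorems A, B and Corollary A of Cardin and Teixeira \cite{cartex} without giving an independent proof. Your additional observations — that (HA) strengthens the hyperbolicity conditions to the attracting case and that compactness of $\mathcal N$, $\mathcal L$ yields the uniform spectral gap needed to upgrade normal hyperbolicity to local exponential attraction — are exactly the points implicit in the paper's specialization.
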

As given, the part regarding $\widetilde f_2$ in condition (HA) is not ready to use in applications. We provide two equivalent versions.
\begin{proposition}
Condition {\em (HA)} is equivalent to either of the following conditions.
\begin{enumerate}[(i)]
\item $D_{x_1}f_1(x,0,0)$ has only eigenvalues with negative real parts on $\mathcal M_1$, and
\[
B_1(x):= -D_{x_1}f_2(x,0,0)D_{x_1}f_1(x,0,0)^{-1}D_{x_2}f_1(x,0,0)+D_{x_2}f_2(x,0,0)
\]
has only eigenvalues with negative real parts on $\mathcal M_2$.
\item  $D_{x_1}f_1(x,0,0)$ has only eigenvalues with negative real parts on $\mathcal M_1$, and for all sufficiently small $\varepsilon>0$ the matrix
\[
B_2(x,\varepsilon):= \begin{pmatrix} D_{x_1}f_1(x,0,0) & D_{x_2}f_1(x,0,0)\\
                         \varepsilon D_{x_1}f_2(x,0,0) &\varepsilon  D_{x_2}f_2(x,0,0)\end{pmatrix}
\]
has only eigenvalues with negative real parts on $\mathcal M_2$.
\end{enumerate}
\end{proposition}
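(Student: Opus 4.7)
The plan is to establish the chain (HA) $\Leftrightarrow$ (i) $\Leftrightarrow$ (ii). The first clause (on $D_{x_1}f_1(x,0,0)$) is common to all three conditions, so only the second clauses need to be matched. For (HA) $\Leftrightarrow$ (i) I show that $D_{x_2}\widetilde f_2(x,0,0) = B_1(x)$ on $\mathcal{M}_2$: differentiating the defining identity
$$f_1\bigl(g(x_2,x_3,\varepsilon_1,\varepsilon_2),x_2,x_3,\varepsilon_1,\varepsilon_2\bigr) \equiv 0$$
with respect to $x_2$ yields $D_{x_2}g = -(D_{x_1}f_1)^{-1} D_{x_2}f_1$, whereupon the chain rule applied to $\widetilde f_2(x_2,x_3,\varepsilon_1,\varepsilon_2) = f_2(g,x_2,x_3,\varepsilon_1,\varepsilon_2)$ gives
$$D_{x_2}\widetilde f_2 = D_{x_1}f_2 \cdot D_{x_2}g + D_{x_2}f_2 = -D_{x_1}f_2 \bigl(D_{x_1}f_1\bigr)^{-1} D_{x_2}f_1 + D_{x_2}f_2 = B_1(x).$$
Thus the two spectral conditions are literally identical.

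For (i) $\Leftrightarrow$ (ii), abbreviate $A = D_{x_1}f_1(x,0,0)$, $C = D_{x_2}f_1(x,0,0)$, $E = D_{x_1}f_2(x,0,0)$, $F = D_{x_2}f_2(x,0,0)$. Since $A$ is invertible by the common first clause, a Schur-complement computation yields
$$\det(\lambda I - B_2(x,\varepsilon)) = \det(\lambda I - A)\,\det\bigl(\lambda I - \varepsilon[F + E(\lambda I - A)^{-1}C]\bigr).$$
Hence for small $\varepsilon > 0$ the spectrum of $B_2(x,\varepsilon)$ splits into $n_1$ \emph{fast} eigenvalues close to those of $A$ (all stable by the first clause) and $n_2$ \emph{slow} eigenvalues of the form $\varepsilon\mu_i(\varepsilon)$. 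Substituting $\lambda = \varepsilon\mu$ in the second determinantal factor and sending $\varepsilon \to 0^+$ shows that each $\mu_i(\varepsilon)$ converges to an eigenvalue of $F - EA^{-1}C = B_1(x)$.

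The implication (i) $\Rightarrow$ (ii) is now immediate from continuity of the spectrum. Conversely, if $B_1(x)$ admitted an eigenvalue $\mu^*$ with $\mathrm{Re}(\mu^*) > 0$, the branch $\mu_i(\varepsilon) \to \mu^*$ would yield an eigenvalue of $B_2(x,\varepsilon)$ with positive real part for all sufficiently small $\varepsilon > 0$, contradicting (ii). The main technical subtlety lies in excluding the borderline case $\mathrm{Re}(\mu^*) = 0$: this requires the analytic expansion $\mu_i(\varepsilon) = \mu^* + a_1\varepsilon + O(\varepsilon^2)$ together with the requirement that $\mathrm{Re}(\varepsilon\mu_i(\varepsilon)) < 0$ hold on a full interval $(0,\varepsilon_0)$, to rule out a purely imaginary $\mu^*$. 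I expect this final step --- closing the equivalence against the non-hyperbolic edge case --- to be the main obstacle in the proof.
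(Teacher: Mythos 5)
Your reduction of (HA) to (i) via the chain rule (computing $D_{x_2}g=-(D_{x_1}f_1)^{-1}D_{x_2}f_1$ and hence $D_{x_2}\widetilde f_2=B_1$ on $\mathcal M_2$) is exactly the paper's argument, and your Schur-complement splitting of $\mathrm{spec}\,B_2(x,\varepsilon)$ into $n_1$ fast eigenvalues near $\mathrm{spec}(A)$ and $n_2$ slow eigenvalues $\varepsilon\mu_i(\varepsilon)$ with $\mu_i(\varepsilon)\to\mathrm{spec}(B_1)$ is a sound and more self-contained route to (i) $\Rightarrow$ (ii) than the paper's (which delegates (i) $\Leftrightarrow$ (ii) to an appendix lemma proved by invoking Tikhonov's theorem for the linear system $\dot x=Ax+By$, $\dot y=\varepsilon Cx+\varepsilon Dy$). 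The genuine gap is in (ii) $\Rightarrow$ (i), precisely at the point you flag as ``the main obstacle'': your plan to exclude a purely imaginary eigenvalue $\mu^*$ of $B_1$ via the expansion $\mu_i(\varepsilon)=\mu^*+a_1\varepsilon+O(\varepsilon^2)$ cannot work, because if $\mathrm{Re}(a_1)<0$ then $\mathrm{Re}\bigl(\varepsilon\mu_i(\varepsilon)\bigr)=\varepsilon^2\,\mathrm{Re}(a_1)+O(\varepsilon^3)<0$ on a full interval $(0,\varepsilon_0)$ even though $\mathrm{Re}(\mu^*)=0$. This actually occurs: take $n_1=1$, $n_2=2$ and
\[
A=(-1),\quad C=(1,\;0),\quad E=\begin{pmatrix}0\\1\end{pmatrix},\quad F=\begin{pmatrix}0&1\\-2&0\end{pmatrix},
\]
so that $B_1=-EA^{-1}C+F=EC+F=\begin{pmatrix}0&1\\-1&0\end{pmatrix}$ has eigenvalues $\pm i$. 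Then
\[
\det\left(\lambda I-\begin{pmatrix}A&C\\ \varepsilon E&\varepsilon F\end{pmatrix}\right)=\lambda^3+\lambda^2+2\varepsilon^2\lambda+\varepsilon^2,
\]
which is Hurwitz for every $\varepsilon>0$ (Routh--Hurwitz: $1\cdot 2\varepsilon^2>\varepsilon^2$), with slow eigenvalues $\pm i\varepsilon-\tfrac12\varepsilon^2+O(\varepsilon^3)$. So condition (ii), read literally as ``negative real parts for all sufficiently small $\varepsilon>0$'', does not imply (i) at the linear-algebra level, and no refinement of your expansion argument will close this.

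What your method does establish is: (i) $\Rightarrow$ (ii), and (ii) $\Rightarrow$ all eigenvalues of $B_1$ have real part $\le 0$. To obtain a true equivalence one must strengthen (ii) to a uniform bound $\mathrm{Re}\,\lambda\le-\theta\varepsilon$ on the slow eigenvalues --- which is exactly the form the paper uses in the hypotheses preceding its decomposition lemma in Section 3 --- and note that in the example above the slow real parts are only $O(\varepsilon^2)$, violating such a bound. You should either prove the proposition with this strengthened version of (ii), or state only the two implications that hold. Be aware that the paper's own appendix lemma (the appeal to Tikhonov's theorem) is equally silent on the direction (ii) $\Rightarrow$ (i) and on this borderline case, so the difficulty is not an artifact of your approach; but as written your proof of that direction is incomplete, and the step you propose to complete it with would fail.
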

\begin{proof} We use the notions introduced with the hyperbolicity condition (H) and Definition \ref{hypadef}. From
\[
f_1(g(x_2,x_3,\varepsilon_1,\varepsilon_2),x_2,x_3,\varepsilon_1,\varepsilon_2)=0
\]
one gets by the chain rule
\[
D_{x_2}g(x_2,x_3)= - D_{x_1}f_1(x,,\varepsilon_1,\varepsilon_2)^{-1} D_{x_2}f_1(x,,\varepsilon_1,\varepsilon_2)
\]
when $f_1(g(x_2,x_3,\varepsilon_1,\varepsilon_2),x_2,x_3,\varepsilon_1,\varepsilon_2)=0$, and a further application of the chain rule shows the equivalence of (HA) and (i). The equivalence of (i) and (ii) follows from Lemma \ref{linsing} in the Appendix.\\

\end{proof}
\begin{remark}\label{variationrem}
\begin{enumerate}[(a)]
\item One may rewrite systems \eqref{fullsys} through \eqref{compredsys} to some extent, with no effect on the reductions. Using Hadamard's lemma, one may restate \eqref{fullsys} as
\[
\begin{array}{rclll}
\dot x_1&=&\widehat f_1(x,\varepsilon_2)&+\varepsilon_1 \widehat f_{1,1}(x,\varepsilon_1)&+\varepsilon_1 \varepsilon_2\widehat f_{1,2}(x,\varepsilon_1,\varepsilon_2)\\
\dot x_2&=& & \phantom{+}\varepsilon_1\widehat f_2(x,\varepsilon_1)&+\varepsilon_1 \varepsilon_2\widehat f_{2,2}(x,\varepsilon_1,\varepsilon_2)\\
\dot x_3&=& & &\phantom{+} \varepsilon_1\varepsilon_2\widehat f_3(x,\varepsilon_1,\varepsilon_2)\\
\end{array}
\]
with only the $\widehat f_i$ remaining in the subsequent reductions. Thus the auxiliary system becomes
\[
\begin{array}{rcl}
0 &=&\phantom{\varepsilon_2}\widehat f_1(x,\varepsilon_2)\\
\dot x_2&=&\phantom{\varepsilon_1}\widehat f_2(x, 0)\\
\dot x_3&=&\varepsilon_2f_3(x,0,\varepsilon_2)\\
\end{array}
\]
and there are analogous modifications for the intermediate and the fully reduced system.
\item The passage from \eqref{fullsys} to the completely reduced system \eqref{compredsys} can evidently be obtained in the following manner: Fix $\varepsilon_1>0$ and reduce \eqref{fullsys} with respect to the small parameter $\varepsilon_2$ (in time scale $\varepsilon_2t$). Then let $\varepsilon_1\to 0$, rescaling time once more to $\tau_3$. We will use this observation later on.
\end{enumerate}
\end{remark}
\section{Coordinate-free reduction}\label{secfree}
In the present section we generalize the coordinate-independent reduction procedure from \cite{nw11,gw2} to the three-timescale setting. The first task is to intrinsically characterize those systems which admit a transformation to ``standard form'' \eqref{fullsys}. Reversing matters, applying a (local) smooth coordinate transformation to equation \eqref{fullsys} yields a smooth system
\begin{equation}\label{fullcofree}
\dot x=g^{(0,0)}(x,\varepsilon_1,\varepsilon_2)+\varepsilon_1\left( g^{(1,0)}(x,\varepsilon_1,\varepsilon_2)+\varepsilon_2g^{(1,1)}(x,\varepsilon_1,\varepsilon_2)\right)
\end{equation}
on an open neighborhood of $\widetilde U\times [0,\delta_1)\times[0,\delta_2)\subseteq \mathbb R^n\times\mathbb R\times\mathbb R$ ($\widetilde U\subseteq\mathbb R^n$ open), evidently satisfying the following conditions:
\begin{enumerate}[(i)]
\item For all sufficiently small $\varepsilon_1\geq 0,\,\varepsilon_2\geq 0$, the zeros of $ g^{(0,0)}(x,\varepsilon_1,\varepsilon_2)$ form a submanifold $\widetilde{\mathcal M}_1\subseteq\widetilde U$, of codimension $n_1$, $1\leq n_1<n$.  Given any compact submanifold $\mathcal P_1\subseteq \widetilde{\mathcal M}_1$, there exists $\theta_1>0$ such that at every $y\in \mathcal P_1$ the derivative $D_x g(y,\varepsilon_1,\varepsilon_2)$ admits the eigenvalue zero with algebraic and geometric multiplicity $n-n_1$, and the remaining eigenvalues have real parts $\leq -\theta_1$.
\item For all sufficiently small $\varepsilon_1>0,\,\varepsilon_2\geq 0$ the zeros of 
\[
g^{(0,0)}(x,\varepsilon_1,\varepsilon_2)+\varepsilon_1 g^{(1,0)}(x,\varepsilon_1,\varepsilon_2)
\]
 form a submanifold $\widetilde{\mathcal M_2}\subseteq\widetilde U$, of codimension $n_1+n_2$, $1\leq n_2<n-n_1$. Moreover, for any compact submanifold $\mathcal P_2\subseteq \widetilde{\mathcal M}_2$  there exists a $\theta_2>0$ with the following property: At every $y\in \mathcal P_2$ the derivative $D_xg^{(0,0)}(y,\varepsilon_1,\varepsilon_2)+\varepsilon_1D_xg^{(1,0)}(y,\varepsilon_1,\varepsilon_2)$  admits the eigenvalue zero with algebraic and geometric multiplicity $n-n_1-n_2$, and the remaining eigenvalues have real parts $\leq -\theta_2\varepsilon_1$.
\end{enumerate}
By Remark \ref{variationrem} one may assume that system \eqref{fullcofree} is in the special form
\begin{equation}\label{restcofree}
\dot x=g^{(0,0)}(x,\varepsilon_2)+\varepsilon_1\left(g^{(1,0)}(x,\varepsilon_1)+\varepsilon_2g^{(1,1)}(x)\right)+O(\varepsilon_2(\varepsilon_1+\varepsilon_2)),
\end{equation}
adjusting conditions (i) and (ii) accordingly. Conditions (i) and (ii) are certainly necessary for \eqref{fullcofree} or \eqref{restcofree} to be a transformed version of \eqref{fullsys}. The first part of the next lemma shows sufficiency.
\begin{lemma}\label{decomplem}
\begin{enumerate}[(a)]
\item There exists a local diffeomorphism transforming system \eqref{restcofree} to a system of type \eqref{fullsys} with condition {\em(HA)} if and only if conditions (i) and (ii) above hold.
\item Condition (i) for system \eqref{restcofree} is equivalent to the following: For any $y\in \widetilde{\mathcal M_1}$ there exist a neighborhood $U_{1,y}$, a smooth map $P_1:\,U_{1,y}\to\mathbb R^{n\times n_1}$ such that $P_1(y,\varepsilon_2)$ has rank $n_1$, and a smooth map $\mu_1:\,U_{1,y}\to\mathbb R^{n_1}$ such that $D_x\mu_1(y,\varepsilon_2)$ has rank $n_1$, yielding a decomposition
\[
g^{(0,0)}(x,\varepsilon_2)=P_1(x,\varepsilon_2)\mu_1(x,\varepsilon_2),
\]
and moreover there is a $\theta_1>0$ such that
\[
A_1(x,\varepsilon_2):=D\mu_1(x,\varepsilon_2)P_1(x,\varepsilon_2)
\]
has only eigenvalues with real part $\leq -\theta_1$, for all $x\in U_{1,y}$.
\item In presence of condition (i), condition (ii) for system \eqref{restcofree} is equivalent to the following: For every (sufficiently small) $\varepsilon_1>0$ and any $y\in \widetilde{\mathcal M_2}$ there exist a neighborhood $U_{2,y}$, a smooth map $P_2:\,U_{2,y}\to\mathbb R^{n\times n_2}$ such that $\left(P_1(y,\varepsilon_2), \varepsilon_1 P_2(y,\varepsilon_1)\right)$ has rank $n_1+n_2$, and a smooth map $\mu_2:\,U_{2,y}\to\mathbb R^{n_2}$ such that $\left(D_x\mu_1(y,\varepsilon_2), D_x\mu_2(y,\varepsilon_2)\right)^{\rm tr}$ has rank $n_1+n_2$, yielding a decomposition
\[
g^{(0,0)}(x,\varepsilon_2)+\varepsilon_1 g^{(1,0)}(x,\varepsilon_1)=P_1(x,\varepsilon_2)\mu_1(x,\varepsilon_2)+\varepsilon_1 P_2(x,\varepsilon_1)\mu_2(x,\varepsilon_1),
\]
and moreover there is a $\theta_2>0$ such that
\[
A_2(x,\varepsilon_1,\varepsilon_2):=\begin{pmatrix}D\mu_1(x,\varepsilon_2)   \\ D\mu_2(x,\varepsilon_1) \end{pmatrix} \begin{pmatrix}  P_1(x,\varepsilon_2)&\varepsilon_1 P_2(x,\varepsilon_1)\end{pmatrix}
\]
has only eigenvalues with real part $\leq -\theta_2\varepsilon_1$, for all $x\in U_{2,y}$.
\end{enumerate}
\end{lemma}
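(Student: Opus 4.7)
My plan is to prove (b) and (c) by local normal-form arguments based on Hadamard's lemma and the implicit function theorem, and then to deduce (a) by using the resulting decompositions to build an explicit local diffeomorphism to \eqref{fullsys}. For part (b), under condition~(i) the implicit function theorem supplies, near any $y\in\widetilde{\mathcal M}_1$, a smooth $\mathbb R^{n_1}$-valued map $\mu_1(x,\varepsilon_2)$ of local defining functions with $D_x\mu_1(y,\varepsilon_2)$ of rank $n_1$. Hadamard's lemma applied component-wise then yields $g^{(0,0)}(x,\varepsilon_2)=P_1(x,\varepsilon_2)\mu_1(x,\varepsilon_2)$ for some smooth matrix-valued $P_1$. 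At $y$ the product rule gives $D_xg^{(0,0)}(y,\varepsilon_2)=P_1(y,\varepsilon_2)D_x\mu_1(y,\varepsilon_2)$, since $\mu_1(y,\varepsilon_2)=0$, and the prescribed multiplicity of the zero eigenvalue forces $P_1(y,\varepsilon_2)$ to have rank $n_1$. The equivalence of the eigenvalue conditions on $D_xg^{(0,0)}$ and on $A_1=D_x\mu_1\,P_1$ then follows from the elementary fact that $MN$ and $NM$ share their nonzero spectrum; the reverse implication runs along the same identities.

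Part~(c) lifts this argument to the codimension-$(n_1{+}n_2)$ submanifold $\widetilde{\mathcal M}_2$. I would regard $\Phi:=g^{(0,0)}+\varepsilon_1 g^{(1,0)}$ as a family of vector fields parametrised by $(\varepsilon_1,\varepsilon_2)$ with joint zero set of codimension $n_1+n_2$, apply the implicit function theorem near $y\in\widetilde{\mathcal M}_2$ to obtain $n_1+n_2$ defining functions, and exploit the special form \eqref{restcofree} (itself a product of Hadamard decompositions, cf.\ Remark~\ref{variationrem}(a)) to separate these into an $\varepsilon_2$-dependent block $\mu_1$, inherited from part~(b) up to adjustments absorbable into the $\varepsilon_1 P_2\mu_2$ slot, and an $\varepsilon_1$-dependent block $\mu_2$. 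A second Hadamard step then produces the decomposition $\Phi=P_1\mu_1+\varepsilon_1 P_2\mu_2$. The rank condition on $(P_1,\varepsilon_1 P_2)$ and the eigenvalue bound on $A_2$ are forced by the multiplicity hypothesis and the same $MN$--$NM$ observation applied to the block product $(P_1,\varepsilon_1 P_2)(D_x\mu_1,D_x\mu_2)^{\rm tr}$, while the converse direction is a direct verification of codimension and spectrum.

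For part~(a), I would use the decompositions from (b) and (c) to produce a local chart on $\widetilde U$ whose first $n_1$ coordinates are $\mu_1$, whose next $n_2$ coordinates are $\mu_2$, and whose last $n_3$ coordinates $\mu_3$ are any smooth functions chosen so that $(D_x\mu_1,D_x\mu_2,D_x\mu_3)^{\rm tr}$ is invertible at $y$. In this chart the system \eqref{restcofree} acquires exactly the form \eqref{fullsys}, and the eigenvalue conditions on $A_1$ and $A_2$ translate to (HA) via the proposition preceding this lemma. The opposite implication of (a) is immediate, since conditions (i), (ii) are coordinate-independent and hold for \eqref{fullsys} under (HA). The hardest step will be the separation in part~(c): one needs the defining functions of $\widetilde{\mathcal M}_2$ to split cleanly into an $\varepsilon_2$-piece and an $\varepsilon_1$-piece, and it is here that the special form \eqref{restcofree} and the freedom to reabsorb adjustments of $\mu_1$ into $\varepsilon_1 P_2\mu_2$ become decisive.
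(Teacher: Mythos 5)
Your parts (b) and (c) follow essentially the same route as the paper, which simply cites \cite{gw2} (Theorem 1, Remarks 2 and 4) for the decomposition $g^{(0,0)}=P_1\mu_1$, the identity $D_xg^{(0,0)}=P_1D_x\mu_1$ on the zero set, and the transfer of spectral data between $P_1D_x\mu_1$ and $D_x\mu_1P_1$; your reconstruction of these steps is correct, though the separation step in (c) is only sketched.

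The genuine gap is in part (a). You take the chart $(\mu_1,\mu_2,\mu_3)$ with $\mu_3$ \emph{arbitrary} complementary coordinates and claim that in this chart \eqref{restcofree} ``acquires exactly the form \eqref{fullsys}''. This fails: the second block of the transformed system is $\dot y_2=D_x\mu_2\cdot\bigl(P_1\mu_1+\varepsilon_1(\cdots)\bigr)$, and the term $D_x\mu_2\,P_1\,\mu_1=D_x\mu_2\,P_1\,y_1$ vanishes only on $\{y_1=0\}$, so it is not identically of order $\varepsilon_1$ on a neighborhood; likewise $D_x\mu_3\cdot g^{(0,0)}$ and $\varepsilon_1 D_x\mu_3\cdot g^{(1,0)}$ are not $O(\varepsilon_1\varepsilon_2)$ for generic $\mu_3$. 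To reach the form \eqref{fullsys} the last $n-n_1$ coordinates must be first integrals of $g^{(0,0)}$, and the final $n-n_1-n_2$ of them must in addition be common first integrals of $g^{(0,0)}+\varepsilon_1 g^{(1,0)}$ for all small $\varepsilon_1$; only then do the corresponding equations acquire the prefactors $\varepsilon_1$ and $\varepsilon_1\varepsilon_2$ identically near $y$. The existence of these $n-n_1$ (resp.\ $n-n_1-n_2$) independent first integrals near a point of $\widetilde{\mathcal M}_1$ (resp.\ $\widetilde{\mathcal M}_2$) is precisely the nontrivial content of part (a); the paper obtains it from Fenichel's Lemma 5.3 (resp.\ \cite{nw11}, Proposition 2.2 in the analytic case), and your proposal omits this ingredient entirely. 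Note also that $\mu_2$ itself cannot serve as the middle block of coordinates for the same reason.
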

\begin{proof}
The nontrivial assertion of part (a) follows from the existence of $n-n_1$ independent first integrals of $g^{(0,0)}$ in a neighborhood of $y$, which was noted by Fenichel \cite{fenichel}, Lemma 5.3 for smooth vector fields, and shown in \cite{nw11}, Proposition 2.2 for the analytic setting, and likewise from the existence of $n-n_1-n_2$ independent first integrals of $g^{(0,0)}+\varepsilon_1g^{(1,0)}$ in a neighborhood of $y$. These first integrals determine slow and ``very slow'' variables. Parts (b) and (c) are straightforward applications of \cite{gw2}, Theorem 1, Remark 4 and Remark 2.
\end{proof}
\begin{remark}\label{ratrem}
The existence of the decomposition $g^{(0,0)}=P_1\,\mu_1$ in part (b) (as well as the decomposition in part (c)) is a consequence of the implicit function theorem in the smooth or analytic case. For polynomial or rational vector fields there exists a decomposition with rational functions as entries of $P_1$ and $\mu_1$, and there is an algorithmic approach to its computation. See \cite{gw2} for details.
\end{remark}
Next we use the decompositions to compute reductions.
\begin{proposition}\label{cofreeredprop}
\begin{enumerate}[(a)]
\item  In arbitrary coordinates the reduction corresponding to the passage from system \eqref{fullsys} to the auxiliary system may be obtained as follows:\\
Given $\varepsilon_2\geq 0$, determine the projection matrix
\[
Q_1(x,\varepsilon_2):=I_n-P_1(x,\varepsilon_2)A_1(x,\varepsilon_2)^{-1}D_x\mu_1(x,\varepsilon_2).
\]
The auxiliary system \eqref{auxsys} for $\varepsilon_2>0$ then corresponds to
\[
\dot x=Q_1(x,\varepsilon_2)\left(g^{(1,0)}(x,0)+\varepsilon_2g^{(1,1)}(x)\right)
\]
on the local invariant manifold defined by $\mu_1(x,\varepsilon_2)=0$.
The equation corresponding to the intermediate reduced system \eqref{interredsys} is obtained by setting $\varepsilon_2=0$.
\item  In arbitrary coordinates the reduction corresponding to the passage from system \eqref{fullsys} to the completely reduced system \eqref{compredsys} may be obtained as follows:\\
Given $\varepsilon_1>0$, determine the projection matrix
\[
\widetilde Q_2(x,\varepsilon_1):=I_n-\begin{pmatrix}P_1(x,0),&\varepsilon_1 P_2(x,0)\end{pmatrix}A_2(x,\varepsilon_1,0)^{-1}\begin{pmatrix}D_x\mu_1(x,0)\\D_x\mu_2(x,0)\end{pmatrix}.
\]
Then $\widetilde Q_2(x,\varepsilon_1)$ extends smoothly to a matrix valued function $Q_2(x)$ at $\varepsilon_1= 0$. The equation corresponding to the completely reduced  system in arbitrary coordinates is given by
\[
\dot x =Q_2(x)\,g^{(1,1)}(x)
\]
on the local invariant manifold defined by $\mu_1(x,0)=\mu_2(x,0)=0$.
\end{enumerate}
\end{proposition}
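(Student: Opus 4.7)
The plan is to derive both reductions as applications of the standard coordinate-free two-timescale Tikhonov-Fenichel reduction of \cite{gw2}, Theorem 1 (already invoked in Lemma \ref{decomplem}), with the appropriate choice of small parameter at each stage, and then to verify an algebraic cancellation that allows the final limit in part (b).

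For part (a) I would treat $\varepsilon_2$ as a passive parameter and let $\varepsilon_1$ play the role of the single small parameter. With $\varepsilon_2 \geq 0$ fixed, the fast vector field of \eqref{restcofree} is $g^{(0,0)}(x,\varepsilon_2) = P_1(x,\varepsilon_2)\mu_1(x,\varepsilon_2)$, and by Lemma \ref{decomplem}(b) the matrix $A_1$ has eigenvalues with negative real parts. The standard two-timescale reduction then produces on the slow manifold $\mu_1(x,\varepsilon_2) = 0$, in time $\tau_2 = \varepsilon_1 t$, the reduced equation $\dot x = Q_1(x,\varepsilon_2)\,h(x,\varepsilon_2)$, with $h$ the coefficient of $\varepsilon_1$ at $\varepsilon_1 = 0$. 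Reading this coefficient off \eqref{restcofree} gives $h = g^{(1,0)}(x,0) + \varepsilon_2 g^{(1,1)}(x)$ as claimed; the $O(\varepsilon_2(\varepsilon_1+\varepsilon_2))$ error contributes only at orders that vanish in the leading-order reduction. Setting $\varepsilon_2 = 0$ recovers the intermediate reduction.

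For part (b) I would follow Remark \ref{variationrem}(b): fix $\varepsilon_1 > 0$, reduce in $\varepsilon_2$, then rescale time to $\tau_3 = \varepsilon_1 \tau_2$ and let $\varepsilon_1 \to 0$. For fixed $\varepsilon_1 > 0$ the fast vector field is $F_0 := g^{(0,0)}(x,0) + \varepsilon_1 g^{(1,0)}(x,\varepsilon_1)$, and Lemma \ref{decomplem}(c) supplies the decomposition $F_0 = \bigl(P_1(x,0),\,\varepsilon_1 P_2(x,\varepsilon_1)\bigr)\begin{pmatrix}\mu_1(x,0)\\\mu_2(x,\varepsilon_1)\end{pmatrix}$ with hyperbolically attractive $A_2(x,\varepsilon_1,0)$. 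The two-timescale reduction then gives, on $\widetilde{\mathcal M}_2$ and in time $\tau_2 = \varepsilon_2 t$,
\[
\frac{dx}{d\tau_2} = \widetilde Q_2(x,\varepsilon_1)\,F_1(x,\varepsilon_1),\qquad F_1 = \partial_{\varepsilon_2}g^{(0,0)}(x,0) + \varepsilon_1 g^{(1,1)}(x) + \varepsilon_1\,r(x,\varepsilon_1),
\]
the term $\varepsilon_1 r$ coming from the error in \eqref{restcofree}. The crucial cancellation is that on $\mu_1(x,0) = 0$,
\[
\partial_{\varepsilon_2}g^{(0,0)}(x,0) = P_1(x,0)\,\partial_{\varepsilon_2}\mu_1(x,0)\in \mathrm{im}(P_1,\,\varepsilon_1 P_2),
\]
so this term is annihilated by the projection $\widetilde Q_2$ (which satisfies $\widetilde Q_2(P_1,\varepsilon_1 P_2) = 0$ by construction). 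Consequently $\widetilde Q_2 F_1 = \varepsilon_1\,\widetilde Q_2\bigl(g^{(1,1)} + r\bigr)$, and rescaling to $\tau_3 = \varepsilon_1 \tau_2$ cancels the $\varepsilon_1$ factor, yielding $\dot x = Q_2(x) g^{(1,1)}(x)$ in the limit after the remainder vanishes.

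The hard part is the smooth extension of $\widetilde Q_2(x,\varepsilon_1)$ to $\varepsilon_1 = 0$, since $A_2(x,\varepsilon_1,0)$ itself degenerates there. I would factor
\[
A_2(x,\varepsilon_1,0) = M(x,\varepsilon_1)\begin{pmatrix}I_{n_1} & 0\\ 0 & \varepsilon_1 I_{n_2}\end{pmatrix},\quad M(x,\varepsilon_1) := \begin{pmatrix}D\mu_1(x,0)\\ D\mu_2(x,\varepsilon_1)\end{pmatrix}\bigl(P_1(x,0),\,P_2(x,\varepsilon_1)\bigr),
\]
so that the $\varepsilon_1^{-1}$ blow-up of $A_2^{-1}$ is exactly cancelled by the $\varepsilon_1$ factor multiplying $P_2$ in $\widetilde Q_2$, giving
\[
\widetilde Q_2 = I_n - (P_1,\,P_2)\,M^{-1}\begin{pmatrix}D\mu_1\\ D\mu_2\end{pmatrix}.
\]
Smoothness at $\varepsilon_1 = 0$ then reduces to invertibility of $M(x,0)$. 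In standard coordinates, $M(x,0)$ becomes the $2\times 2$ block Jacobian $\left(\begin{smallmatrix}D_{x_1}f_1 & D_{x_2}f_1\\ D_{x_1}f_2 & D_{x_2}f_2\end{smallmatrix}\right)$, invertible by Schur complement from the hyperbolicity of $D_{x_1}f_1$ together with that of $B_1$; since invertibility is preserved under diffeomorphism, the same holds in arbitrary coordinates. Equivalently, one may appeal to Lemma \ref{linsing} in the Appendix.
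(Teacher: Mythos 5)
Your part (a) coincides with the paper's (a direct application of \cite{gw2}, Theorem 1, with $\varepsilon_2$ carried along as a passive parameter). For part (b) you arrive at the same result by a genuinely different route on the one point where the paper actually does work, namely the smooth extension of $\widetilde Q_2$ to $\varepsilon_1=0$. The paper argues geometrically: it identifies $\widetilde Q_2$ as the projection onto the kernel of $D_xg^{(0,0)}+\varepsilon_1D_xg^{(1,0)}$ along its image, observes that the image equals the column space of $(P_1,\,P_2)$ (full rank and smooth at $\varepsilon_1=0$) and the kernel equals the column space of $\bigl(-A_1^{-1}A_2,\ I\bigr)^{\rm tr}$, and then invokes Lemma \ref{projlem} to get smooth dependence of the projection onto one smoothly varying complement along another. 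You instead cancel the singularity algebraically via $A_2=M\,\diag(I_{n_1},\varepsilon_1 I_{n_2})$, obtaining the closed formula $\widetilde Q_2=I_n-(P_1,P_2)M^{-1}\bigl(D\mu_1;D\mu_2\bigr)$ and reducing everything to invertibility of $M(x,0)$, which follows from (HA) via the Schur complement, i.e.\ Lemma \ref{linsing}. Both arguments rest on the same two facts (the image is really that of $(P_1,P_2)$, and the relevant square matrix is nonsingular at $\varepsilon_1=0$); yours has the advantage of producing an explicit, directly computable expression for $Q_2(x)$, while the paper's is coordinate-geometric and reuses its appendix lemma. You also make explicit the cancellation $\widetilde Q_2\,\partial_{\varepsilon_2}g^{(0,0)}(x,0)=P_1\partial_{\varepsilon_2}\mu_1\mapsto 0$ on $\mu_1=0$, which the paper leaves implicit.

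One small point to tighten: your phrase ``after the remainder vanishes'' is not quite the right justification for discarding $r$. The term $r(x,0)$ coming from the $O(\varepsilon_2(\varepsilon_1+\varepsilon_2))$ remainder need not vanish pointwise; what saves the statement is that, by the structure guaranteed in Remark \ref{variationrem}(a), this contribution lies (in the separated coordinates, hence in any coordinates) in the image of the fast Jacobian, i.e.\ in the column space of $(P_1,P_2)$, and is therefore annihilated by $Q_2$ --- the same mechanism you already used for $\partial_{\varepsilon_2}g^{(0,0)}$. With that one-line fix the argument is complete.
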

\begin{proof} Part (a) is a direct application of \cite{gw2}, Theorem 1. For part (b) this theorem is also applicable, but there is a  technical problem involving $\widetilde Q_2$ as $\varepsilon_1\to 0$, since $A_2(x,0)$ is non-invertible. 
To resolve this difficulty, recall that $\widetilde Q_2(x,\varepsilon_1)$ is the projection map onto the kernel of 
\[
D_xg^{(0,0)}(x,0)+\varepsilon_1 D_xg^{(1,0)}(x,\varepsilon_1)
\]
along the image, for $ x\in \widetilde {\mathcal M}_2$ (see \cite{gw2}, Remark 1). With the conditions given in Lemma \ref{decomplem} (c) the image is equal to the column space of $(P_1,\,\varepsilon_1 P_2)$, which in turn equals the column space $W_1$ of $(P_1,\, P_2)$. The latter matrix has full rank at $\varepsilon_1=0$, and its entries depend smoothly on $\varepsilon_1$ and $x$. Moreover the kernel is equal to the kernel
of $(D_x\mu_1,\,D_x\mu_2)^{\rm tr}$, and we may assume w.l.o.g.~that 
\[
\begin{pmatrix}D_x\mu_1\\ D_x\mu_2\end{pmatrix}=\begin{pmatrix} A_1 & A_2\end{pmatrix}
\]
with invertible $A_1$, whence the kernel is equal to the column space $W_2$ of the matrix
\[
\begin{pmatrix}-A_1^{-1}A_2\\ I\end{pmatrix}
\]
with entries depending smoothly on $\varepsilon_1$. Thus there remains to verify that the matrix of the projection onto $W_2$ along $W_1$ depends smoothly on $\varepsilon_1$. For the sake of completeness we give a proof of this fact in Lemma \ref{projlem}, Appendix.
\end{proof}
We note that the reduction also works, including convergence properties, under the weaker assumption corresponding to (H) rather than (AH) for $A_1$ and $A_2$ in Lemma \ref{decomplem}.
\begin{remark}\label{initrem}

While Proposition \ref{cofreeredprop} provides the reduced equations, one also needs initial values for these, which may be obtained from an initial value $y$ of system \ref{restcofree} with the help of the first integrals noted in the proof of Lemma \ref{decomplem}(a); see \cite{gw2}, Proposition 2:
\begin{itemize}
\item Assuming that $y$ is sufficiently close to $\widetilde {\mathcal M}_1$ , the corresponding initial value (up to an error of order $\varepsilon_1+\varepsilon_2$) for the auxiliary system and for the intermediate reduced system is the (locally unique) intersection of $\widetilde {\mathcal M}_1$ and the level sets of $n-n_1$ independent first integrals of $\dot x=g^{(0,0)}(x,0)$ which contain $y$.
\item Assuming that $y$ is sufficiently close to $\widetilde {\mathcal M}_2$ , the corresponding initial value (up to an error of order $\varepsilon_1+\varepsilon_2$) for the auxiliary system and for the intermediate reduced system is the (locally unique) intersection of $\widetilde {\mathcal M}_2$ and the level sets of $n-n_1-n_2$ independent common first integrals of $\dot x=g^{(0,0)}(x,0)$ and $\dot x=g^{(1,0)}(x,0,0)$ which contain $y$. (A direct application of Proposition 2 in \cite{gw2}  would lead to simultaneous first integrals of $\dot x=g^{(0,0)}(x,0)+\varepsilon_1 g^{(1,0)}(x,\varepsilon_1,0)$ for all $\varepsilon_1$. This is equivalent to the condition stated.)
\end{itemize}
\end{remark}
To illustrate the procedure with an example, we recall the competitive inhibition network with substrate $S$, enzyme $E$, inhibitor $I$ and two complexes $C_1,\,C_2$; see for instance Keener and Sneyd \cite{KeSn}. The reaction scheme is given by

\[
\begin{array}{rcccl}
E+S&\overset{k_1}{\underset{k_{-1}}{\rightleftharpoons}}&
C_1&\overset{k_2}{\rightharpoonup}&E+P,\\
E+I&\overset{k_{3}}{\underset{k_{-3}}{\rightleftharpoons}}&
C_2& & 
\end{array}
\]
which leads (with the usual assumptions of mass action kinetics, spatial homogeneity and constant thermodynamical parameters) to the differential equation system
\begin{equation}\label{compinhib}
  \begin{array}{rcl}
 \dot s&=& k_{-1}c_1-k_1s(e_0-c_1-c_2)\\
 \dot c_1&=&k_1s(e_0-c_1-c_2)-(k_{-1}+k_2)c_1\\
 \dot c_2&=&k_3(e_0-c_1-c_2)(i_0-c_2)-k_{-3}c_2
 \end{array}
\end{equation}
for the concentrations. (The original system is five dimensional; the two linear first integrals $e+c_1+c_2$ and $i+c_2$ yield reduction to dimension three.)

\begin{example}\label{competitiveinhibition}{\em 
In system \eqref{compinhib} set $x=(s,\,c_1,\,c_2)^{\rm tr}$ and assume $k_2=\varepsilon_1\varepsilon_2k_2^*$, $k_3=\varepsilon_1k_3^*$ and $k_{-3}=\varepsilon_1k_{-3}^*$. (Coloquially speaking, binding to the inhibitor and degradation from the inhibitor complex are slow, while degradation from the substrate complex to enzyme and product is very slow.) This is of the type \eqref{restcofree}, with
\begin{align*}
 g^{(0,0)}(x)&=\begin{pmatrix}k_{-1}c_1-k_1s(e_0-c_1-c_2)\\ k_1s(e_0-c_1-c_2)-k_{-1}c_1\\ 0\end{pmatrix},\\
 g^{(1,0)}(x,\varepsilon_1)&=\begin{pmatrix}0\\ 0\\ k_3^*(e_0-c_1-c_2)(i_0-c_2)-k_{-3}^*c_2\end{pmatrix},\\
 g^{(1,1)}(x,\varepsilon_1,\varepsilon_2)&=\begin{pmatrix}0\\-k_2^*c_1\\0\end{pmatrix}.
\end{align*}
Moreover $\widetilde{\mathcal{M}}_2$ is contained in the common zero set of
 \[
\mu_1=k_{-1}c_1-k_1s(e_0-c_1-c_2) \text{  and    }\mu_2= k_3^*(e_0-c_1-c_2)(i_0-c_2)-k_{-3}^*c_2,
\]
$\widetilde{\mathcal{M}}_1$ is contained in the zero set of $\mu_1$, and we have
\[
 P_1(x,\varepsilon_2)=\begin{pmatrix}1\\-1\\0\end{pmatrix},\quad
 P_2(x,\varepsilon_1)=\begin{pmatrix}0 \\ 0 \\ 1\end{pmatrix}.
\]
We determine the auxiliary system and the intermediate reduced system. With 
\[
D\mu_1=\left(-k_1(e_0-c_1-c_2,\,k_1s+k_{-1}, k_1s\right)
\]
one has 
\[
D\mu_1P_1=-k_1(e_0-c_1-c_2-(k_1s+k_{-1})=:-\nu_1
\]
and furthermore
\[
\begin{array}{rcl}
Q_1&=&I_3+\frac{1}{\nu_1}\begin{pmatrix}*&k_1s+k_{-1}&k_1s\\ * &-(k_1s+k_{-1})&-k_1s\\0&0&0\end{pmatrix}\\
       &=&\frac{1}{\nu_1}\begin{pmatrix}*&k_1s+k_{-1}&k_1s\\ * &k_1(e_0-c_1-c_2)&-k_1s\\0&0&\nu_1\end{pmatrix}.
\end{array}
\]
Application to 
\[
g^{(1,0)}+\varepsilon_2g^{(1,1)}=\mu_2\begin{pmatrix}0\\ 0\\ 1\end{pmatrix}-\varepsilon_2k_2^*c_1\begin{pmatrix}0\\ 0\\ 1\end{pmatrix}
\]
yields the auxiliary system  (in time scale $\varepsilon_1 t$) on $\widetilde{\mathcal{M}}_1$:
\[
\begin{pmatrix}\dot s \\ \dot c_1 \\ \dot c_2\end{pmatrix}=\frac{\mu_2}{\nu_1}\begin{pmatrix}k_1s\\ -k_1s\\ \nu_1\end{pmatrix}-\varepsilon_2\frac{k_2^*c_1}{\nu_1}\begin{pmatrix}k_1s+k_{-1}\\ k_1(e_0-c_1-c_2)\\ 0\end{pmatrix}. 
\]
Setting $\varepsilon_2=0$ one obtains the intermediate reduced system.\\
When the initial values for system \eqref{compinhib} are given by $(s_0, c_{1,0}, c_{2,0})$, to obtain the approximate initial values $(s^*_0, c^*_{1,0}, c^*_{2,0})$ on $\widetilde{\mathcal M_1}$ one uses (according to Remark \ref{initrem}) the two first integrals $s+c_1$ and $c_2$ of $g^{(0,0)}$ and the defining equation for $\widetilde{\mathcal M_1}$, thus the system
\[
\begin{array}{rcl}
s+c_1&=&s_0+c_{1,0}\\
c_2&=&c_{2,0}\\
k_{-1}c_1-k_1s(e_0-c_1-c_2)&=&0
\end{array}
\]
which leads to quadratic equations for $s$ and $c_1$.\\

To find the fully reduced system one first computes
\[
D\mu_2=\left( 0 ,\, -k_3^*(i_0-c_2),\,-k_3^*(e_0+i_0-c_1-2c_2) -k_{-3}^*\right)
\]
and
\[
\begin{array}{rcl}
A_2&=&\begin{pmatrix} D\mu_1\\ D\mu_2\end{pmatrix}\begin{pmatrix}P_1,& \varepsilon_1 P_2\end{pmatrix}\\
      &=&\begin{pmatrix} -k_1(e_0-c_1-c_2)-k_1s-k_{-1} &\varepsilon_1\cdot k_1s \\
               k_3^*(i_0-c_2) & -\varepsilon_1\cdot(k_3^*(e_0+i_0-c_1-2c_2) +k_{-3}^*)\end{pmatrix}.
\end{array}
\]
The computation of the projection matrix is straightforward (although a software system is helpful) but the output is sizeable. We just record the
fully reduced system (in time scale $\varepsilon_1\varepsilon_2 t$). It is given by 
\[
\dot x=\frac{1}{\nu_2}\cdot \begin{pmatrix}\xi_1\\\xi_2\\\xi_3\end{pmatrix}
\]
with
\begin{align*}
\nu_2=&sc_1k_1k_3^*+sc_2k_1k_3^*-se_0k_1k_3^*-c_1^2k_1k_3^*-3c_1c_2k_1k_3^*+2c_1e_0k_1k_3^*+c_1i_0k_1k_3^*-2c_2^2k_1k_3^*\\
&+3c_2e_0k_1k_3^*+c_2i_0k_1k_3^*-e_0^2k_1k_3^*-e_0i_0k_1k_3^*-sk_{-3}^*k_1+c_1k_{-3}^*k_1+c_1k_{-1}k_3^*+c_2k_{-3}^*k_1\\
&+2c_2k_{-1}k_3^*-e_0k_{-3}^*k_1-e_0k_{-1}k_3^*-i_0k_{-1}k_3^*-k_{-3}^*k_{-1}
\end{align*}
and
\begin{align*}
\xi_1=&k_2^*(se_0k_{-3}^*k_1+c_1e_0k_{-1}k_3^*-c_1i_0k_{-1}k_3^*+c_2^2k_{-1}k_3^*-c_2e_0k_{-1}k_3^*-c_2i_0k_{-1}k_3^*\\
&+e_0i_0k_{-1}k_3^*-c_2k_{-3}^*k_{-1}),\\[0.2cm]
\xi_2=&\frac{k_1k_2^*}{k_3^*}(c_1^3(k_3^{*})^2-2c_1^2e_0(k_3^{*})^2+2c_1^2i_0(k_3^{*})^2+c_1e_0^2(k_3^{*})^2-2c_1e_0i_0(k_3^{*})^2\\
&+c_1i_0^2(k_3^{*})^2+c_2^3(k_3^{*})^2-c_2^2e_0(k_3^{*})^2-2c_2^2i_0(k_3^{*})^2+2c_2e_0i_0(k_3^{*})^2+c_2i_0^2(k_3^{*})^2-e_0i_0^2(k_3^{*})^2\\
&-c_1^2k_{-3}^*k_3^*+c_1e_0k_{-3}^*k_3^*+2c_1i_0k_{-3}^*k_3^*-3c_2^2k_{-3}^*k_3^*+2c_2e_0k_{-3}^*k_3^*+3c_2i_0k_{-3}^*k_3^*\\
&-2e_0i_0k_{-3}^*k_3^*+2c_2(k_{-3}^{*})^2),\\[0.2cm]
\xi_3=& -\frac{k_{-3}^*k_1k_2^*}{k_3^*}(c_1i_0k_3^*-c_2^2k_3^*+c_2e_0k_3^*+c_2i_0k_3^*-e_0i_0k_3^*+c_2k_{-3}^*)
\end{align*}
restricted to the invariant curve $\widetilde{\mathcal M}_2$.\\
Finally, given initial values $(s_0, c_{1,0}, c_{2,0})$ for system \eqref{compinhib}, approximate initial values for the fully reduced system may be determined by solving the algebraic equations $s+c_1=s_0+c_{1,0}$, $\mu_1=0$ and $\mu_2=0$.
}
\end{example}

\section{Critical parameter values}\label{seccrit}
Typically in applications one starts with a general parameter dependent system, rather than a system of type \eqref{fullsys} or \eqref{fullcofree} with pre-assigned ``small parameters''. Therefore the first task is to determine critical parameter values, for which small perturbations  lead to singular perturbation scenarios. Thus we consider {\em Tikhonov-Fenichel parameter values}, as defined in \cite{gwz} for two time scales, and extend the notion to the three time scale setting. 
\subsection{Tikhonov-Fenichel parameter values}
Tikhonov-Fenichel parameter values (TFPV) were introduced in \cite{gwz} for polynomial (or rational) parameter dependent systems
\begin{equation}\label{arbpar}
\dot x = h(x,\pi),\quad x\in\mathbb R^n,\,\pi\in\Pi\subseteq\mathbb R^m.
\end{equation}
A TFPV $\widehat \pi$ is characterized by the property that small perturbations $\pi=\widehat \pi+\varepsilon \rho+\cdots$ along a smooth curve in parameter space $\Pi$  give rise to a singular perturbation reduction for
\[
\dot x=h(x,\widehat \pi+\varepsilon\rho+\cdots)=h(x,\widehat\pi)+\varepsilon D_\pi h(x,\widehat\pi)\rho+\cdots
\]
with locally exponentially attracting critical manifold. (The definition extends easily to smooth systems but the algorithmic approach relies on the stronger assumption.) There exists an intrinsic characterization of TFPV's, see \cite{gwz} Lemmas 1 and 2, for which the characteristic polynomial
\begin{equation}\label{charpol}
\chi(\tau,x,\pi)=\tau^n+\sigma_{n-1}(x,\pi)\tau^{n-1}+\cdots+ \sigma_1(x,\pi)\tau+\sigma_0(x,\pi)
\end{equation}
of the Jacobian $D_xh(x,\pi)$ is relevant.
We recall:
\begin{lemma}\label{tfpvchar}
Given $0<s<n$, a parameter value $\widehat\pi$ is a TFPV with locally exponentially attracting critical manifold $Z_s$ (depending on $\widehat \pi$) of dimension $s$, and $x_0 \in Z_s$, if and only if the following hold:
\begin{itemize}
\item $h(x_0,\widehat\pi)=0$.
\item The characteristic polynomial $\chi(\tau,x,\pi)$ from \eqref{charpol}) satisfies
\begin{enumerate}[(i)]
\item $\sigma_0(x_0,\widehat\pi)=\cdots=\sigma_{s-1}(x_0,\widehat\pi)=0$;
\item all roots of $\chi(\tau,x_0,\widehat\pi)/\tau^s$ have negative real parts.
\end{enumerate}
\item The system $\dot x=h(x,\widehat\pi)$ admits $s$ independent local analytic first integrals at $x_0$.
\end{itemize}
\end{lemma}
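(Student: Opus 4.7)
The plan is to prove the stated equivalence by reducing to classical Tikhonov--Fenichel theory for two time scales: in one direction I will extract spectral, dimensional and integrability data from the definition of a TFPV, and in the other I will construct a slow-fast normal form around $x_0$ on which Tikhonov's theorem applies directly.

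For the ``only if'' direction, I would start from the hypothesis that $\widehat\pi$ is a TFPV with exponentially attracting critical manifold $Z_s$ of dimension $s$ through $x_0$. Since $Z_s$ consists of equilibria of $\dot x=h(x,\widehat\pi)$, we immediately get $h(x_0,\widehat\pi)=0$, and $T_{x_0}Z_s\subseteq \ker D_xh(x_0,\widehat\pi)$. Exponential attractivity forces the remaining $n-s$ eigenvalues of $D_xh(x_0,\widehat\pi)$ to have negative real parts, so that the eigenvalue $0$ has algebraic multiplicity exactly $s$; translating this into the coefficients of $\chi$ gives conditions (i) and (ii). Finally, the analytic version of Fenichel's fibration theorem supplies an analytic strong-stable foliation over $Z_s$, and projecting any point of a neighborhood of $x_0$ along its stable fiber onto $Z_s$ produces $s$ independent analytic functions that are constant on orbits of $h(\cdot,\widehat\pi)$, i.e., the required first integrals.

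For the ``if'' direction, I would use condition (i) to obtain algebraic multiplicity $\geq s$ of the eigenvalue $0$ at $x_0$. Differentiating the identity $D\phi_i(x)\cdot h(x,\widehat\pi)\equiv 0$ for any local analytic first integral $\phi_i$ at the equilibrium $x_0$ gives $D\phi_i(x_0)\cdot D_xh(x_0,\widehat\pi)=0$, so the differentials of the $s$ independent first integrals span an $s$-dimensional subspace of the left kernel; together with (ii) this forces $0$ to be a semisimple eigenvalue of multiplicity exactly $s$. Taking $(\phi_1,\ldots,\phi_s)$ as the first $s$ coordinates of a local analytic chart and completing them to $(\phi,y)$, the unperturbed system becomes
\[
\dot\phi=0,\qquad \dot y=\psi(\phi,y),
\]
with critical manifold locally $\{\psi=0\}$, and normal hyperbolicity plus exponential attractivity inherited from (ii). A perturbation $\pi=\widehat\pi+\varepsilon\rho+\cdots$ then puts the system in standard Tikhonov slow-fast form, and Tikhonov's theorem delivers the singular perturbation reduction with exponentially attracting $Z_s$, showing that $\widehat\pi$ is a TFPV.

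The hardest step will be the analytic first-integral construction in the ``only if'' direction: Fenichel's fibration theorem in its usual formulation only gives a smooth strong-stable foliation, so one has to invoke the analytic version of the stable manifold/foliation theorem, or alternatively argue directly via the analytic implicit function theorem applied to a local parametrization of $Z_s$ by its tangent space at $x_0$. Once analytic first integrals are in hand, the remaining steps -- the spectral bookkeeping with $\sigma_0,\ldots,\sigma_{s-1}$ and the passage from the normal form to a Tikhonov system -- are essentially routine.
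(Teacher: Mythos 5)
The paper does not actually prove this lemma --- it is recalled from \cite{gwz} (Lemmas 1 and 2 there) --- and your argument reproduces the strategy of that reference: unpack exponential attractivity of the $s$-dimensional manifold of equilibria into the spectral conditions on $\chi$, obtain the $s$ first integrals by projecting along the stable fibration onto $Z_s$, and conversely use the first integrals as slow coordinates $(\phi,y)$ so that $D_y\psi(x_0)$ carries exactly the $n-s$ stable eigenvalues and Tikhonov's theorem applies to any perturbation $\widehat\pi+\varepsilon\rho+\cdots$. The single step you rightly flag as nontrivial --- analyticity of the first integrals, where Fenichel's Lemma 5.3 only yields smoothness --- is precisely what Proposition 2.2 of \cite{nw11} supplies in the analytic (hence polynomial/rational) setting, so your outline is sound and matches the cited source.
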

All the conditions in the lemma can be represented by polynomial equations and inequalities. The condition on the roots of $\chi(\tau,x_0,\widehat\pi)/\tau^s$ is characterized by inequalities: There exist $n-s$ Hurwitz determinants (see e.g. Gantmacher \cite{Gant}, Ch.~V, \S6, Thm.~4 ff.) which must attain values $>0$. Moreover, the existence requirement for $s$  independent first integrals leads to a series of polynomial equations via degree by degree evaluation of Taylor expansions. More precisely, for every $d>0$ there is an induced action of $D_xh(x,\pi)$ on the space $S_1+\cdots +S_d$ of polynomials in $n$ variables with zero constant term and of degree $\leq d$.  Extending condition (i), the characteristic polynomial of this action (which coincides with \eqref{charpol} for $d=1$) must have vanishing coefficients for all sufficiently small powers of the indeterminate. (No further inequalities appear, due to the structure of the eigenvalues for this action.) Thanks to Hilbert's {\em Basissatz}, finitely many of these equations suffice. A full account is given in \cite{gwz}.\\

 For the remainder of this section we assume that $\Pi\subseteq \mathbb R_+^m$ is a semi-algebraic set, and that system \eqref{arbpar} admits the positively invariant subset $\mathbb R^n_+$. Then, as was shown in \cite{gwz}, the Tikhonov-Fenichel parameter values for dimension $s$, $1\leq s<n$ form a semi-algebraic subset $\Pi_s\subseteq\mathbb R^m$. We will denote the Zariski closure of $\Pi_s$ by $W_s$. Thus the elements of $W_s$ satisfy all defining equations for $\Pi_s$ but not necessarily the defining inequalities.


\subsection{Nested Tikhonov-Fenichel parameter values}
Generalizing the approach to TFPV in \cite{gwz}), and taking into account the special form of \eqref{fullcofree}, it seems reasonable to consider surfaces in parameter space. Thus  
consider a smooth surface of the special form
\[
 \gamma(\varepsilon_1,\varepsilon_2)=\widehat{\pi}+\varepsilon_1\left({\rho}_1(\varepsilon_1)+\varepsilon_2{\rho_2}(\varepsilon_1,\varepsilon_2)\right)
\]
defined in some nighborhood of $(0,\,0)$.
Substitute $\gamma(\varepsilon_1,\,\varepsilon_2)$ for $\pi$ in \eqref{arbpar} to get

\begin{equation}\label{hadam}
\begin{array}{rcl}
 h\left(x,\gamma(\varepsilon_1,\varepsilon_2)\right)&
 =&\underbrace{h(x,\widehat{\pi})}_{=:g^{(0,0)}}+\underbrace{h\left(x,\gamma(\varepsilon_1,0)\right)-h(x,\widehat{\pi})}_{=:\varepsilon_1\cdot g^{(1,0)}}\\
& +&\underbrace{\left(h(x,\gamma(\varepsilon_1,\varepsilon_2))-h(x,\widehat{\pi})\right)-\left(h(x,\gamma(\varepsilon_1,0))-h(x,\widehat{\pi})\right)}_{=:\varepsilon_1\varepsilon_2g^{(1,1)}}
\end{array}
\end{equation}
with the $g^{(i,j)}$ smooth by Hadamard's lemma. In order to obtain a system \eqref{fullcofree} that also satisfies the conditions (i) and (ii) preceding Lemma \ref{decomplem}, the following is necessary: There exist $s>0$ and $k>0$ such that  $\widehat{\pi}\in \Pi_{s+k}$, and  $\widehat{\pi}+\varepsilon_1\cdot{\rho_1}(\varepsilon_1)\in \Pi_s$ for all sufficiently small $\varepsilon_1>0$. (Note that $\varepsilon_2$ plays no role in these conditions.) This observation gives rise to:

\begin{definition}\label{defnested}
Given system \eqref{arbpar} and $s,\,k>0$ with $s+k<n$, let $\delta>0$ and let
\[\beta:(-\delta,\delta)\longrightarrow {\Pi},\ \varepsilon_1\mapsto \beta(\varepsilon_1)\] be a smooth curve such that 
 \begin{enumerate}[(i)]
  \item $\beta(\varepsilon_1)\in \Pi_s$ for all $\varepsilon_1>0$,
  \item $\widehat{\pi}:=\beta(0)\in \Pi_{s+k}$.
 \end{enumerate}
Then we call $\widehat{\pi}$ a Tikhonov-parameter value (for dimension $s+k$) nested in $\overline{\Pi}_s$.
\end{definition}
We note some properties of nested TFPV.
\begin{proposition}\label{intheboundary}
\begin{enumerate}[(a)]
\item Any TFPV $\widehat{\pi}\in \Pi_{s+k}$ which is nested in $\overline{\Pi}_s$ lies in the boundary of $\Pi_s$ relative to its Zariski closure $W_s$.
\item Let $\beta$ as in Definition \ref{defnested}, and for $\varepsilon_1>0$ consider the decomposition
\[
h(x,\,\beta(\varepsilon_1))=P^*(x,\,\varepsilon_1)\mu^*(x,\,\varepsilon_1)
\]
according to \cite{gw2}, Theorem 1. Then 
\[
\det D\mu^*(x,\,0)\,P^*(x,0))=0
\]
on the critical manifold.
\end{enumerate}
\end{proposition}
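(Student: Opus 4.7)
The plan is to lean on Lemma \ref{tfpvchar}, which describes $\Pi_s$ by polynomial equations together with open Hurwitz inequalities, and to combine this with Theorem 1 of \cite{gw2} and a straightforward dimension count.

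For part (a), I would first show $\widehat\pi\in W_s$. By Lemma \ref{tfpvchar} applied with $s+k$, there is some $x_0\in Z_{s+k}$ at which $\sigma_0(x_0,\widehat\pi)=\cdots=\sigma_{s+k-1}(x_0,\widehat\pi)=0$ and the system admits $s+k$ independent local analytic first integrals; in particular $\sigma_0=\cdots=\sigma_{s-1}=0$ holds and $s$ independent first integrals are available, so every polynomial equation cutting out $\Pi_s$ is satisfied at $(x_0,\widehat\pi)$ and $\widehat\pi\in W_s$. Next I would observe that $\sigma_s(x_0,\widehat\pi)=0$ too, so $\chi(\tau,x_0,\widehat\pi)/\tau^s$ has $\tau=0$ as a root of multiplicity $\geq k$; the strict Hurwitz condition (ii) of Lemma \ref{tfpvchar} for dimension $s$ therefore fails and $\widehat\pi\notin\Pi_s$. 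Finally, $\beta(\varepsilon_1)\in\Pi_s$ for $\varepsilon_1\in(0,\delta)$ with $\beta(0)=\widehat\pi$ gives $\widehat\pi\in\overline{\Pi}_s$. Taken together, these three facts place $\widehat\pi$ in the boundary of $\Pi_s$ relative to $W_s$.

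For part (b), the starting point is Theorem 1 of \cite{gw2} applied to $\beta(\varepsilon_1)\in\Pi_s$ for $\varepsilon_1>0$: this produces a decomposition
\[
h(x,\beta(\varepsilon_1))=P^*(x,\varepsilon_1)\,\mu^*(x,\varepsilon_1)
\]
in which $P^*$ has full column rank $n-s$ and $D_x\mu^*$ has full row rank $n-s$ on the critical manifold of $\beta(\varepsilon_1)$. Because the construction in \cite{gw2} is built from rational expressions with a nowhere vanishing denominator at the relevant point, $P^*$ and $\mu^*$ depend smoothly on $\varepsilon_1$ and extend to $\varepsilon_1=0$, giving $h(x,\widehat\pi)=P^*(x,0)\,\mu^*(x,0)$ with $P^*(x,0)$ still of full column rank $n-s$ by lower semicontinuity of rank. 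The decisive step is then a dimension count: since $\widehat\pi\in\Pi_{s+k}$, the critical manifold of $\dot x=h(x,\widehat\pi)$ has dimension $s+k$, and locally it coincides with the zero set of $\mu^*(\cdot,0)$ (as $P^*(\cdot,0)$ has full column rank). Hence $D_x\mu^*(x,0)$ has rank at most $n-(s+k)=n-s-k$ on that manifold, and the $(n-s)\times(n-s)$ product $D_x\mu^*(x,0)\,P^*(x,0)$ inherits this bound; its rank is at most $n-s-k<n-s$, forcing the determinant to vanish on the critical manifold.

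The main technical hurdle is in part (b), namely establishing that the factors $P^*$ and $\mu^*$ produced by the algorithm of \cite{gw2} for $\varepsilon_1>0$ admit a smooth extension to $\varepsilon_1=0$ while $P^*(\cdot,0)$ keeps full column rank—this requires reading off the construction of \cite{gw2} and invoking semicontinuity of rank. Once that is settled, the determinantal conclusion is immediate, since the rank of $D_x\mu^*(\cdot,0)$ is forced to drop by $k$ simply because the zero fibre of $h$ enlarges from an $s$-dimensional manifold to an $(s+k)$-dimensional one.
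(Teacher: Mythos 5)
Your part (a) is in line with the paper, which dismisses it as ``a direct consequence of the definition'': the essential content is that $\beta(\varepsilon_1)\in\Pi_s$ for $\varepsilon_1>0$ together with $\beta(\varepsilon_1)\to\widehat\pi$ places $\widehat\pi$ in $\overline{\Pi}_s\subseteq W_s$, while membership in $\Pi_{s+k}$ prevents $\widehat\pi$ from being interior to $\Pi_s$ in $W_s$. Be aware, though, that your stronger claim $\widehat\pi\notin\Pi_s$ is only established at the particular point $x_0\in Z_{s+k}$; the paper's Remark \ref{boundaryrem} explicitly allows boundary points that still belong to $\Pi_s$ (via a different $s$-dimensional critical manifold elsewhere), so that claim is neither needed for the boundary statement nor available in general.

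For part (b) you take a genuinely different route. The paper argues algebraically: since $\widehat\pi\in\Pi_{s+k}$ with $k>0$, condition (i) of Lemma \ref{tfpvchar} forces $\sigma_s(x_0,\widehat\pi)=0$; on the zero set one has $D_xh(x_0,\widehat\pi)=P^*(x_0,0)\,D\mu^*(x_0,0)$, and the characteristic polynomial of $P^*D\mu^*$ equals $\tau^s$ times that of the $(n-s)\times(n-s)$ matrix $D\mu^*P^*$, so $\sigma_s=\pm\det(D\mu^*P^*)$ and the claim follows with no hypothesis on the rank of $P^*(x,0)$. Your geometric dimension count reaches the same conclusion, but the step asserting that $P^*(x,0)$ retains full column rank ``by lower semicontinuity of rank'' is backwards: lower semicontinuity guarantees that the rank cannot be smaller nearby than at a given point, i.e.\ it precisely permits the rank to drop at the limit $\varepsilon_1=0$, which is the case you cannot exclude. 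Without full column rank of $P^*(\cdot,0)$, the zero set of $\mu^*(\cdot,0)$ need not coincide with (or contain) $Z_{s+k}$, and your rank bound on $D_x\mu^*(x,0)$ does not follow. The argument is easily repaired --- if $P^*(x,0)$ has rank $<n-s$ then $\det\bigl(D\mu^*(x,0)P^*(x,0)\bigr)=0$ trivially, and otherwise your dimension count applies --- but as written the justification of this step is incorrect; the paper's characteristic-polynomial argument sidesteps the issue entirely.
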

\begin{proof} Part (a) is a direct consequence of the definition. As for part (b), at $\varepsilon_1=0$, with $\widehat \pi\in  \Pi_{s+k}$ and $x_0\in  Z_{s+k}$ (using notation from Lemma \ref{tfpvchar}), the coefficient $\sigma_s(x_0,\widehat\pi)$ of the characteristic polynomial \eqref{charpol} of
\[
D_xh(x_0,\,\widehat\pi)=P^*(x_0,0)D\mu^*(x_0,0)
\]
must vanish. This is equivalent to non-invertibility of $D\mu^*(x,\,0)\,P^*(x,0))$; see e.g. \cite{gw2}, Remark 4.

\end{proof}
\begin{remark}\label{boundaryrem} Proposition \ref{intheboundary} opens a starting point for the computation of nested TFPV: Start with system \eqref{arbpar} corresponding to
``generic'' parameter values in $\Pi_s$, i.e. parameter values in the intersection of $\Pi_s$ with an irreducible component of the Zariski closure $W_s$. In order to find nested parameters for higher dimension one only needs to look at the boundary of $\overline{\Pi}_s$, and one can use part (b) in order to obtain necessary conditions. Practically this may be realized by determining the decomposition $P\cdot\mu$ for generic $\pi\in \Pi_s$ and then looking at zeros of $D\mu\cdot P$, with parameters in the boundary. (The boundary may also contain further parameter values in $\Pi_s$.)
\end{remark}
\subsection{Special settings for chemical reaction networks}
For chemical reaction networks (CRN) the parameter region is usually given by $\Pi=\mathbb R_+^m$, thus
\[
\pi=\begin{pmatrix}\pi_1\\ \vdots\\ \pi_m\end{pmatrix}\in \R_+^m,
\]
and for many such systems and given $s$, the irreducible components of $W_s$ are just determined by the vanishing of certain of the $\pi_i$; see e.g. \cite{godiss, gwz,gwz3}. (The underlying reason for this fact is the subject of forthcoming work.) Thus we have, for $\pi$ in a given irreducible component:
\begin{enumerate}[(i)]
 \item Upon relabelling, there is an $\ell$, $0<\ell<m$ such that $\pi_i=0$ for all $i\in \{\ell+1,\cdots m\}$; 
 \item the remaining parameters are nonnegative.
\end{enumerate}
In other words, the intersection of $\Pi_s$ with the given irreducible component of $W_s$ corresponds to some subset of $\overline {\mathbb R_+^\ell}$, with boundary $\overline {\mathbb R_+^\ell}\setminus \mathbb R_+^\ell$. This leads to an obvious case-by-case analysis. Note that boundary points may or may not be contained in $\Pi_s$, but there is no loss in starting with ``generic'' parameter values in the interior $\mathbb R^\ell_{>0}$.
 We look at a particular example.
\begin{example}\label{comptfpv}{\em 
We again consider competitive inhibition; see equation \eqref{compinhib}.  Here the parameters are of the form
\[
 \pi=\begin{pmatrix}e_0\\k_1\\k_{-1}\\k_2\\i_0\\k_3\\k_{-3}\end{pmatrix}\in\mathbb R_+^7.
\]
From \cite{gwz}, Proposition 8 we have the necessary condition $e_0k_1k_{2}k_{-3}=0$ for a TFPV in $\Pi_1$, with each of the four cases (e.g. $e_0=0$ and all other parameter values $\geq 0$) yielding a singular perturbation reduction with attracting one dimensional critical manifold. Hence $W_1$ has four irreducible components. In order to find nested TFPV's for dimension 2 we perform a case-by-case investigation. We only consider one case here; see Section \ref{examplesec} for the remaining ones.\\
For the case $k_2=0$ the system is given by 
  \begin{align*}
   \dot s&=k_{-1}c_1-k_1se\\
   \dot c_1&=k_1se-k_{-1}c_1\\
   \dot c_2&=k_3e i-k_{-3}c_2,
  \end{align*}
where we have used the abbreviations $e=e_0-c_1-c_2$ and $i=i_0-c_2$; note that $e\geq 0$ and $i\geq 0$ by design of \eqref{compinhib}.
By Remark \ref{boundaryrem}, nested TFPV's for dimension two and corresponding points in the critical manifold necesarily satisfy $\det\left(D\mu \cdot P\right)=0$, with 
\begin{align*}
 \mu=\begin{pmatrix}k_{-1}c_1-k_1se\\k_3ei-k_{-3}c_2\end{pmatrix},\ P=\begin{pmatrix}1 & 0 \\ -1 & 0 \\ 0 & 1\end{pmatrix},
\end{align*}
\[
 D\mu=\begin{pmatrix}-k_1e& k_1s+k_{-1} & k_1s \\ 0&-k_3 i & -(k_3i+k_3e+k_{-3})\end{pmatrix}.
\]
Proceeding according to Remark \ref{boundaryrem}, we determine the vanishing set of 
\begin{align*}
 &\det\begin{pmatrix}-(k_1e+k_1s+k_{-1}) & k_1s \\ k_3 i & -(k_3i+k_3e+k_{-3})\end{pmatrix}\\
 =&k_1k_3ie+k_1k_3e^2+k_1k_{-3}e+k_1k_3se+k_1k_{-3}s+k_{-1}k_3 i+k_{-1}k_3e+k_{-1}k_{-3}.
\end{align*}
Since all the variables and parameters are nonnegative, this sum equals zero if and only if every summand vanishes.
In particular, $k_{-1}\cdot k_{-3}$ has to vanish for any nested TFPV. We look at the two ensuing cases.
\begin{enumerate}[(i)]
\item $k_{-1}=0$: Then the remaining conditions are
\[
 k_1k_3 ie=k_1k_3e^2=k_1k_{-3}e=k_1k_3se=k_1k_{-3}s=0.
\]
If $k_1=0$ or $k_3=k_{-3}=0$ we obtain a two dimensional variety of stationary points. Checking the attractivity conditions (HA), one finds that these cases yield nested TFPV. If $e=s=0$ holds then we get $e_0-c_1-c_2=0=s$ which corresponds to a one dimensional variety. In case $e=k_{-3}=0$ we get $c_1=0$ while $c_2$ and  $s$ are arbitrary, thus we have a two dimensional (attracting) variety of stationary points.
\item $k_{-3}=0$: In this case there remains
\[
 k_1k_3ie=k_1k_3e^2=k_1k_3se=k_{-1}k_3 i=k_{-1}k_3e=0.
\]
In view of case (i)  we only have to check $k_3=0$ or $e=i=0$. In both cases we get a variety of dimension two. 
 \end{enumerate}
The case $k_3=k_{-3}=0$ leads to system \eqref{compinhib} with $k_3=\varepsilon_1k_3^*$, $k_{-3}= \varepsilon_1k_{-3}^*$ and $k_2=\varepsilon_1\varepsilon_2k_2^*$, the reduction of which was discussed in Example \ref{competitiveinhibition}.}
\end{example}


\section{Further examples}\label{examplesec}
In this section we continue the discussion of the competitive inhibitor network, to some extent, and furthermore present  a fairly complete investigation of a cooperative system with two complexes, following the strategy outlined in Remark \ref{boundaryrem}. Missing from a complete analysis are some cases concerned with boundary points in $\Pi_1\subseteq W_1$ which themselves belong to $\Pi_1$, as well as certain degenerate cases for $\Pi_2$. Moreover we will not generally record routine calculations to verify conditions such as (HA), and for ease of notation we will frequently use the term ``critical manifold'' for the Zariski closure of this object, without mentioning the inequalities to be satisfied.

\subsection{Competitive inhibition (cont.)}\label{detailsinhibition}
 We continue to investigate the competitive inhibition network; see equation \eqref{compinhib}, Examples \ref{competitiveinhibition} and \ref{comptfpv}. The analysis of TFPV which was started in Example \ref{comptfpv} will be finished here. For $\Pi_1$ there are three remaining cases, viz. $e_0=0$, $k_1=0$ and $k_{-3}=0$.
 \begin{enumerate}[(a)]

 \item For $e_0=0$, the system is given by
\begin{equation}\label{compezero}
   \begin{array}{rcl}
   \dot s&=&(k_1s+k_{-1})c_1+k_1sc_2\\
   \dot c_1&=&-(k_1s+k_{-1}+k_2)c_1-k_1sc_2\\
   \dot c_2&=&-k_3ic_1-(k_3 i+k_{-3})c_2
  \end{array}
\end{equation}
 We only consider the generic case for $\Pi_1$, thus all the remaining parameters are $>0$. Then the (only possible) decomposition $P\cdot\mu$ for the right hand side is given by
 \[
\mu=\begin{pmatrix}c_1\\c_2\end{pmatrix}, \quad  P=\begin{pmatrix}k_1s+k_{-1} & k_1s \\ -(k_1s+k_{-1}+k_2) & -k_1s \\ -k_3 i & -(k_3 i+k_{-3})\end{pmatrix}.
 \]
For nested TFPV, a simple computation yields the necessary condition
\[
0= \det(D\mu\cdot P)=ik_{-1}k_3+ik_2k_3+sk_{-3}k_1+k_{-3}k_{-1}+k_{-3}k_2,
\]
with all terms positive; thus every summand must vanish, and in particular
\[
  \det(D\mu\cdot P)=0\ \Rightarrow\ (k_{-1}+k_2)k_{-3}=0\ \Rightarrow\ k_{-1}=k_2=0\ \text{or}\ k_{-3}=0.
\]
In case $k_{-1}=k_2=0$ system \eqref{compezero} admits a two dimensional variety of stationary points, with $k_1s\not=0$  only if $c_1+c_2=0$. The intersection of this variety with the positive orthant is only one dimensional, thus we do not obtain a two dimensional critical manifold. The cases with $k_1s=0$ translate to $k_1=\varepsilon k_1^*$  for the system with small parameters. (Otherwise the critical manifold would be given by $s=0$, which does not contain the line given by $c_1=c_2=0$.) 
Moreover we have $e_0=\varepsilon_1\varepsilon_2e_0^*$, hence every term $k_1e_0s$ is of the form $\varepsilon_1^2\varepsilon_2\cdot(\cdots)$, and the completely reduced system is necessarily trivial.
Likewise, the case $k_{-3}=0$ in system \eqref{compezero} leads to $k_1s=0$. \\
To summarize, the case $e_0=0$ yields no interesting reductions for the three time scale setting, in marked contrast to the familiar (quasi-steady state) reduction for small initial enzyme concentration with two time scales.
\item Next we consider the system with $k_1=0$, i.e.
\begin{align*}
\dot s&=k_{-1}c_1\\
\dot c_1&=-(k_{-1}+k_2)c_1\\
\dot c_2&=k_3ei-k_{-3}c_2.
\end{align*}
Because of Example \ref{comptfpv}  we may assume that $k_2\neq 0$, which yields $c_1=0$ for stationary points.\\
 In case  $k_{-3}=k_3=0$ we indeed have a two dimensional critical manifold. Turning to small parameters we have $k_1=\varepsilon_1\varepsilon_2k_1^*$, $k_3=\varepsilon_1 k_3^*$ and $k_{-3}=\varepsilon_1k_{-3}^*$, and \eqref{compinhib} becomes
\begin{equation}\label{compkone}
\begin{pmatrix}\dot s\\ \dot c_1\\ \dot c_2\end{pmatrix}=\begin{pmatrix}k_{-1}c_1\\ -(k_{-1}+k_2)c_1\\ 0\end{pmatrix}+\varepsilon_1\begin{pmatrix}0\\ 0\\ k_3^*ei-k_{-3}^*c_2\end{pmatrix}+\varepsilon_1\varepsilon_2k_1^*es\begin{pmatrix}-1\\ 1\\ 0\end{pmatrix}
\end{equation}
We compute the reductions for this case. For the auxiliary system (on the critical variety defined by $c_1=0$) we obtain the decomposition 
\[
\begin{pmatrix}k_{-1}c_1\\ -(k_{-1}+k_2)c_1\\ 0\end{pmatrix}=\underbrace{\begin{pmatrix}k_{-1}\\ -(k_{-1}+k_2)\\ 0\end{pmatrix}}_{P_1} \cdot \underbrace{c_1}_{\mu_1},
\]
and a straightforward computation yields the projection matrix
\[
Q_1=\begin{pmatrix} 1 & k_{-1}/(k_{-1}+k_2) & 0\\
                                0&0&0\\
                                0&0& 1\end{pmatrix}
\]
and the auxiliary system 
\[
\begin{pmatrix}\dot s\\ \dot c_1\\ \dot c_2\end{pmatrix}=\begin{pmatrix}0\\ 0\\ k_3^*ei-k_{-3^*}c_2\end{pmatrix}+\varepsilon_2\frac{k_1^*k_2es}{k_{-1}+k_2}\begin{pmatrix}-1\\ 0\\ 0\end{pmatrix}
\]
on the variety defined by $c_1=0$. The intermediate reduced system is obtained setting $\varepsilon_2=0$.\\ 
Turning to the complete reduction, the decomposition of the ``fast part'' of \eqref{compkone} is given by
\[
\begin{pmatrix}k_{-1}&0\\ -(k_{-1}+k_2)&0\\ 0& \varepsilon_1\end{pmatrix}\begin{pmatrix}c_1\\ \mu_2\end{pmatrix}, \quad \text{with  }  \mu_2=k_3^*ei-k_{-3}^*c_2.
\]
One obtains 
\[
A_2=\begin{pmatrix}-(k_{-1}+k_2) & 0 \\ (k_{-1}+k_2)k_3i & -\varepsilon_1\left(k_3(e+i)+k_{-3}c_2\right)\end{pmatrix}
\]
and may continue as prescribed by Proposition \ref{cofreeredprop}. There are shortcuts, though: First note that the critical manifold is given by $c_1=0$, and $c_2$ constant and equal to the smaller solution $\widetilde c_2$ of the quadratic equation
\[
0=\mu_2(0,c_2)= k_3^*(e_0-c_2)(i_0-c_2)-k_{-3}^*c_2.
\]
The completely reduced system will automatically yield $\dot c_1=0$ and $\dot c_2=0$, hence only the first row of the projection matrix needs to be computed. As the final result of the reduction procedure we get the equation
\[
\dot s=-\frac{k_1^*k_2}{k_{-1}+k_2}(e_0-\widetilde c_2)s,
\]
with the dot denoting differentiation with respect to $\varepsilon_1\varepsilon_2 t$.

\item Finally, we  deal with the case $k_{-3}=0$, which does not automatically yield a one dimensional variety of stationary points. System  \eqref{compinhib} becomes
\begin{align*}
\dot s&=k_{-1}c_1-k_1se\\
\dot c_1&=k_1se-(k_{-1}+k_2)c_1\\
\dot c_2&=k_3ei.
\end{align*}
We may assume that $k_1\not=0$ and $k_2\not=0$, otherwise one would arrive at (non-generic) subcases of previously discussed systems. From this we obtain $c_1=0$ and $es=0$ as necessary conditions. Now $e=0$ and nonnegativity of varaibles imply $e_0=0$; a previously discussed case, therefore every stationary point satisfies $s=0$. If $k_3\not=0$ then $i=0$ forces $c_2=i_0$; the corresponding parameter values are not in $\Pi_1$. So the only case remaining is $k_3=k_{-3}=0$ (very slow binding to the inhibitor, very slow degradation of the inhibitor complex), with system
\begin{align*}
\dot s&=k_{-1}c_1-k_1se\\
\dot c_1&=k_1se-(k_{-1}+k_2)c_1\\
\dot c_2&=0.
\end{align*}
To obtain a two dimensional variety of stationary points one has to check the boundary of $\Pi_1$ for nested TFPV, which splits into four cases. We only discuss the case $k_1=0$ here, thus \eqref{compinhib} with small parameters becomes
\begin{equation}\label{compkthree}
\begin{pmatrix}\dot s\\ \dot c_1\\ \dot c_2\end{pmatrix}=\begin{pmatrix}k_{-1}c_1\\ -(k_{-1}+k_2)c_1\\ 0\end{pmatrix}+\varepsilon_1k_1^*es\begin{pmatrix}-1\\ 1\\ 0\end{pmatrix}+\varepsilon_1\varepsilon_2\begin{pmatrix}0\\ 0\\ k_3^*ei-k_{-3}^*c_2\end{pmatrix}
\end{equation}
The computation of the auxiliary system runs similar to the reduction of \eqref{compkone} and yields
\[
\begin{pmatrix}\dot s\\ \dot c_1\\ \dot c_2\end{pmatrix}=\frac{k_1^*k_2es}{k_{-1}+k_2}\begin{pmatrix}-1\\ 0\\ 0\end{pmatrix}+\varepsilon_2\begin{pmatrix}0\\ 0\\ k_3^*ei-k_{-3^*}c_2\end{pmatrix},
\]
on the invariant variety given by $c_1=0$. Finally, the completely reduced system lives on the variety defined by $c_1=s=0$ (a coordinate subspace), and therefore by \cite{gwz3}, Proposition 5 the reduced system may be directly obtained via ``classical'' QSS reduction; yielding 
\[
\dot c_2=k_3^*(e_0-c_2)(i_0-c_2)-k_{-3}^*c_2.
\]
\end{enumerate}

\subsection{A cooperative system}\label{cooperativesystem}
 In this subsection we study the standard cooperative system involving substrate $S$, two complexes $C_1,C_2$, enzyme $E$ and product $P$. The reaction scheme 
\[
\begin{array}{rcccl}
 S + E &\overset{k_{1}}{\underset{k_{-1}}\rightleftharpoons}& C_1  
&\overset{k_2}{\rightharpoonup}& E+P \\
S+C_1 &\overset{k_{3}}{\underset{k_{-3}}\rightleftharpoons}& C_2 
&\overset{k_4}{\rightharpoonup}& C_1+P\\
\end{array}
\]
yields, with the usual assumptions and stoichiometry, the differential equation
\begin{equation}\label{coopeq}
 \begin{array}{rcl}
  \dot s&=&-k_1e_0s+(k_{-1}+k_1s-k_3s)c_1+(k_1s+k_{-3})c_2\\
  \dot c_1&=&k_1e_0s-(k_{-1}+k_2+k_1s+k_3s)c_1+(k_{-3}+k_4-k_1s)c_2\\
  \dot c_2 &=&k_3 sc_1-(k_{-3}+k_4)c_2
 \end{array}
 \end{equation}
where all appearing constants are non-negative. According to Goeke \cite{godiss}, Kap.\ 9.4, necessary conditions for TFPV  are given by
\[
 e_0k_1k_2(k_{-3}+k_4)=0.
\]
\subsubsection{Case $k_1=0$}
When we substitute $k_1=0$ in equation \eqref{coopeq} we obtain
 \begin{align*}
  \dot s&=(k_{-1}-k_3s)c_1+k_{-3}c_2\\
  \dot c_1&=-(k_{-1}+k_2+k_3s)c_1+(k_{-3}+k_4)c_2\\
  \dot c_2 &=k_3 sc_1-(k_{-3}+k_4)c_2.
 \end{align*}
Hence considering the generic case (all remaining parameters $>0$) we obtain an irreducible component of $W_1$ given by $k_1=0$, and the critical manifold is given by $c_1=c_2=0$. We get a decomposition with
\[
 P=\begin{pmatrix}-sk_3+k_{-1}& k_{-3}\\ -(sk_3+k_{-1}+k_2)&k_{-3}+k_4\\sk_3 &  -(k_{-3}+k_4)\end{pmatrix}, \quad \mu=\begin{pmatrix}c_1\\ c_2\end{pmatrix},
\]
and necessary conditions for nested TFPV from
\[
0=\det D\mu\cdot P=(k_{-1}+k_2)(k_{-3}+k_4)\Rightarrow k_{-1}=k_2=0\text{  or  }k_{-3}=k_4=0.
\]
Thie first set of conditions does not, by itself, yield a two dimensional critical manifold, and we will not pursue it further here. The second set, i.e.\ $k_{-3}=k_4=0$, yields the two dimensional variety given by $ c_1=0$. \\
Considering this setting, we introduce the small parameters in our original system by substituting $k_1=\varepsilon_1\varepsilon_2 k_1^*$, $k_{-3}=\varepsilon_1k_{-3}^*$, $k_4=\varepsilon_1 k_4^*$ . Ordering the parameters as $e_0,k_1,k_{-1},k_2,,k_3,k_{-3},k_4$ , we thus consider the surface in  parameter space given by 
\[
\gamma(\varepsilon_1,\varepsilon_2)=\begin{pmatrix}e_0\\0\\k_{-1}\\k_2\\k_3\\0\\0\end{pmatrix}+\varepsilon_1\cdot \left(\begin{pmatrix}0\\0\\0\\0\\0\\k_{-3}^*\\k_4^{*}\end{pmatrix}+\varepsilon_2\begin{pmatrix}0\\k_1^*\\0\\0\\0\\0\\0\end{pmatrix}\right),
\]
and with $x=(s,\,c_1,\,c_2)^{\rm tr}$ we get
\begin{equation}\label{coopkonezero}
 h(x,\varepsilon_1,\varepsilon_2)=g^{(0,0)}(x)+\varepsilon_1\cdot \left(g^{(1,0)}(x,\varepsilon_1)+\varepsilon_2\cdot g^{(1,1)}(x,\varepsilon_1,\varepsilon_2)\right)
\end{equation}
with
\begin{align*}
 g^{(0,0)}(x)&=\begin{pmatrix}(-sk_3+k_{-1})c_1\\ -(sk_3+k_{-1}+k_2)c_1\\k_3 sc_1\end{pmatrix}\\
 g^{(1,0)}(x,\varepsilon_1)&=\begin{pmatrix}k_{-3}^*c_2\\ (k_{-3}^{*}+k_4^*)c_2\\ -(k_{-3}^*+k_4^*)c_2\end{pmatrix}\\
 g^{(1,1)}(x,\varepsilon_1,\varepsilon_2)&=\begin{pmatrix}sk_1^*c_1+sk_1^*c_2-k_1^*e_0 s\\-(sk_1^*c_1+sk_1^*c_2-k_1^*e_0 s)\\ 0\end{pmatrix}.
\end{align*}
For this system we compute the complete reduction on $c_1=c_2=0$ and the intermediate reduction on $c_1=0$.
In order to compute the completely reduced system, a factorization of $g^{(0,0)}+\varepsilon_1 g^{(1,0)}$  is given by 
\[
\begin{pmatrix}P_1, &\varepsilon_1P_2\end{pmatrix}\cdot\begin{pmatrix}\mu_1\\ \mu_2\end{pmatrix}
\]
with $\mu_1=c_1$, $\mu_2=c_2$, and
\[
 P_1=\begin{pmatrix}-sk_3+k_{-1}\\ -(sk_3+k_{-1}+k_2)\\k_3 s\end{pmatrix},\quad 
 P_2=\begin{pmatrix}k_{-3}^*\\ k_{-3}^{*}+k_4^*\\ -(k_{-3}^*+k_4^*)\end{pmatrix}.
\]
The projection matrix is
\begin{align*}
Q_2=\begin{pmatrix}1 & -\frac{-sk_3k_4^*-k_{-3}^*k_{-1}-k_{-1}k_4^*}{k_{-3}^*k_{-1}+k_2k_{-3}^*+k_{-1}k_4^*+k_2k_4^*} & -\frac{-sk_3k_4^*-2k_{-3}^*k_{-1}-k_2k_{-3}^*-k_{-1}k_4^*}{k_{-3}^*k_{-1}+k_2k_{-3}^*+k_{-1}k_4^*+k_2k_4^*}\\ 0 & 0 & 0\\ 0 & 0 & 0 \end{pmatrix},
\end{align*}
and the fully reduced system in very slow time on the invariant manifold $c_1=c_2=0$ is given by the equation
\[
\dot s=-\frac{k_3k_4^*s+k_{-3}^*k_2+k_4^*k_2}{k_{-3}^*k_{-1}+k_2k_{-3}^*+k_{-1}k_4^*+k_2k_4^*}\cdot k_1^*e_0s.
\]
Similarly one computes the intermediate system on the two dimensional variety given by $c_1=0$ from the decomposition $P_1\cdot\mu_1$:
\begin{align*}
 \begin{pmatrix}\dot s \\ \dot c_1\\ \dot c_2\end{pmatrix}=\frac1{sk_3^*+k_{-1}+k_2}\begin{pmatrix}-(sc_2k_3k_4^*+2k_{-1}c_2k_{-3}^*+k_2k_{-3}^*c_2+c_2k_4^*k_{-1})\\0
\\-(k_{-1}c_2k_{-3}^*+k_2k_{-3}^*c_2+c_2k_4^*k_{-1}+c_2k_2k_4^*)\end{pmatrix}.
\end{align*}
\ \\
\subsubsection{Case $e_0=0$} From $e_0=0$ one also obtains a component of $W_1$, and system \eqref{coopeq} specializes to
 \begin{align*}
 \dot s&=(k_{-1}+k_1 s-k_3 s)c_1+(k_1 s+k_{-3})c_2\\
 \dot c_1&=-(k_{-1}+k_2+k_1s+k_3s)c_1+(k_{-3}+k_4-k_1s)c_2\\
 \dot c_2&=k_3 s c_1-(k_{-3}+k_4)c_2.
 \end{align*}
The right hand side has a factorization $P\cdot \mu$ with
\[
 \mu=\begin{pmatrix}c_1\\c_2\end{pmatrix},\quad\ P=\begin{pmatrix}sk_1-sk_3+k_{-1} & sk_1+k_{-3}\\ -sk_1-sk_3-k_{-1}-k_2 & -sk_1+k_{-3}+k_4\\ sk_3 & -k_{-3}-k_4\end{pmatrix},
\]
and in order to obtain nested TFPV we examine all variable-parameter configurations that satisfy
\begin{align*}
0= \det(D\mu\cdot P)=sk_1\cdot (sk_3+k_{-3}+k_4)+(k_{-1}+k_2)\cdot (k_{-3}+k_4).
\end{align*}
The plane given by $s=0$ is not a viable candidate for a two dimensional critical manifold since it does not contain the line $c_1=c_2=0$. This leaves the cases
 $k_1=k_{-1}=k_2=0$, $k_1=k_{-3}=k_4=0$ and $k_3=k_{-3}=k_4=0$. \\
The first of these yields a two dimensional variety (defined by $k_3sc_1-k_{-3}c_2=0$) only under the additional condition $k_4=0$. The second case, whenever $k_1\not=0$, yields a variety whose intersection with the positive orthant has dimension one, hence is of no relevance. For the third case we obtain a two dimensional variety only if $k_1=0$ or $k_2=0$.\\ With the exception of this very last case, the completely reduced system will always be trivial, due to $k_1=\varepsilon_1k_1^*$ and $e_0=\varepsilon_1\varepsilon_2 e_0^*$, which implies $k_1e_0=O(\varepsilon_1^2\varepsilon_2)$. 
We consider one spacial case, viz.\ the intermediate reduction coresponding to the nested TFPV with $k_1=k_3=k_{-3}=k_4=0$; here $c_1=0$ defines the two dimensional critical manifold. Considering 
\[
 \gamma\left(\varepsilon_1,\varepsilon_2\right)=\begin{pmatrix}0\\0\\k_{-1}\\k_2\\0\\0\\0\end{pmatrix}+\varepsilon_1\cdot \left(\begin{pmatrix}0\\k_1^*\\0\\0\\k_3^*\\k_{-3}^*\\k_4^{*}\end{pmatrix}+\varepsilon_2\begin{pmatrix}e_0^*\\0\\0\\0\\0\\0\\0\end{pmatrix}\right),
\]
 we compute:
\begin{align*}
 g^{(0,0)}&=\begin{pmatrix}c_1k_{-1}\\ -(k_{-1}+k_2)c_1 \\0\end{pmatrix}\\
 g^{(1,0)}&=\begin{pmatrix}(sk_1^*-sk_3^*)c_1+(sk_1^*+k_{-3}^*)c_2\\ -(sk_1^*+sk_3^*)c_1+(-sk_1^*+k_{-3}^*+k_4^*)c_2\\ k_3^*sc_1-(k_{-3}^*+k_4^*)c_2\end{pmatrix}\\
 g^{(1,1)}&=\begin{pmatrix}-\varepsilon_1k_1^*e_0^*s\\ \varepsilon_1k_1^*e_0^*s\\ 0\end{pmatrix}.
\end{align*}
The intermediate reduced system on the invariant variety $c_1=0$ is then:
\begin{align*}
 \begin{pmatrix}\dot s\\ \dot c_1\\ \dot c_2\end{pmatrix}=\begin{pmatrix} \left(s c_2k_1^*k_2+2c_2k_{-3}^*k_{-1}+c_2k_{-3}^*k_2+c_2k_{-1}k_4^*\right)(k_{-1}+k_2)\\0\\-(k_{-3}^*+k_4^*)c_2\end{pmatrix}.
\end{align*}

\subsubsection{Case $k_{-3}=k_4=0$} These conditions define a component of $W_1$, and generically the critical manifold is given by $s=c_1=0$. System \eqref{coopeq} is given by
\[
\begin{array}{rcl}
  \dot s&=&-k_1e_0s+(k_{-1}+k_1s-k_3s)c_1+k_1sc_2\\
  \dot c_1&=&k_1e_0s-(k_{-1}+k_2+k_1s+k_3s)c_1-k_1sc_2\\
  \dot c_2 &=&k_3 sc_1
 \end{array}
\]
and the product decomposition (which we do not write down here) yields
\[
0=\det D\mu\cdot P=k_1(e_0-c_2)(k_2+2k_3s).
\]
as necessary conditions for nested TFPV.
One possible case is $k_1=0$ with critical manifold $c_1=0$. The remaining cases are:
\begin{enumerate}[(i)]
\item $k_2=k_{-1}=0$ with variety $s=0$;
\item $k_2=k_3=0$ with variety $k_1(e_0-c_1-c_2)s-k_{-1}c_1=0$.
\end{enumerate}
Note that the condition $e_0-c_2=0$ does not yield a two dimensional critical variety.
\subsubsection{Case $k_2=0$} In this situation system \eqref{coopeq} simplifies to
\[
 \begin{array}{rcl}
  \dot s&=&-k_1e_0s+(k_{-1}+k_1s-k_3s)c_1+(k_1s+k_{-3})c_2\\
  \dot c_1&=&k_1e_0s-(k_{-1}+k_1s+k_3s)c_1+(k_{-3}+k_4-k_1s)c_2\\
  \dot c_2 &=&k_3 sc_1-(k_{-3}+k_4)c_2.
 \end{array}
\]
The condition $k_2=0$ by itself does not define an irreducible component of $W_1$; in other words it does not guarantee the existence of a one dimensional variety of stationary points. Therefore we first investigate sufficient conditions, using the observation $\dot s+\dot c_1+2\dot c_2=-k_4c_2$.
\begin{enumerate}[(a)]
\item For $k_4\not=0$ this observation implies that any stationary point satisfies $c_2=0$, and the remaining condition is $k_3sc_1=0$. 
\begin{enumerate}[(i)]
\item In case $k_3\not=0$ we have either $s=0$, with the variety of stationary points given by $s=c_2=0$; in turn this yields the parameter configuration
\[
k_{-1}=k_2=0.
\]
\item Alternatively we have $c_1=0$, the variety is given by $c_1=c_2=0$, and one must have $k_1e_0=0$. We obtain the possible parameter configurations
\[
k_1=k_2=0 \text{  or  } e_0=k_2=0.
\]
\end{enumerate}
\item In case $k_4=0$ the remaining system is
\[
\begin{array}{rcl}
\dot s&=& -k_1es+k_{-1}c_1-k_3sc_1+k_{-3}c_2\\
\dot c_1&=& k_1es-k_{-1}c_1+k_3sc_1+k_{-3}c_2\\
\dot c_2&=&k_3sc_1-k_{-3}c_2.
\end{array}
\]
Adding the first two equations for stationary points shows that $k_{-3}c_2=0$, and combining this with the third equation yields $k_3sc_1=0$; in addition one has $k_1es-k_{-1}c_1=0$. Thus there are further conditions for the existence of a one dimensional critical variety.
\begin{enumerate}[(i)]
\item Given that $k_3\not=0$ and $k_{-3}\not=0$, the variety is given either by $c_2=s=0$, which yields the parameter conditions
\[
k_2=k_{-1}=k_4=0;
\]
 or the variety is given by $c_1=c_2=0$, with parameter conditions
\[
k_2=k_4=k_1=0\text{  or  } k_2=k_4=e_0=0;
\]
all of these are special cases from (a).
\item In case $k_3=0$ we obtain the one dimensional variety given by $c_2=0$ and $k_1(e_0-c_1)s-k_{-1}c_1=0$; thus we have the parameter condition
\[
k_2=k_3=k_4=0
\]
which defines a component of $W_1$.
\item In case $k_{-3}=0$ one obtains the variety $s=c_1=0$, with parameter conditions
\[
k_2=k_4=k_{-3}=0.
\]
\end{enumerate}
\end{enumerate}

For all these parameters the next task is to discuss conditions for embedded TFPV. We will only do so for two cases.
\begin{enumerate}
\item In the case $k_{-1}=k_2=0$ one has a decomposition
\[
\begin{pmatrix}-k_1e-k_3c_1 & k_{-3}\\ k_1e-k_3c_1 & k_{-3}+k_4\\k_3c_1 &-( k_{-3}+k_4)\end{pmatrix}\cdot\begin{pmatrix} s\\ c_2\end{pmatrix}
\]
which yields
\[
\det D\mu\cdot P=k_1(k_{-3}+k_4)e+ k_3k_4c_1.
\]
We take a closer look at the case $k_1=k_3=0$, with critical variety $c_2=0$.
The surface in parameter space
\[
 \gamma\left(\varepsilon_1,\varepsilon_2\right)=\begin{pmatrix}e_0\\0\\0\\0\\0\\k_{-3}\\k_4\end{pmatrix}+\varepsilon_1\cdot \left(\begin{pmatrix}0\\k_1^*\\0\\0\\k_3^*\\0\\0\end{pmatrix}+\varepsilon_2\begin{pmatrix}0\\0\\k_{-1}^*\\k_2^*\\0\\0\\0\end{pmatrix}\right)
\]
yields
\begin{align*}
 g^{(0,0)}&=\begin{pmatrix}c_2k_{-3}\\(k_{-3}+k_4)c_2\\-(k_{-3}+k_4)c_2
 \end{pmatrix}\\
 g^{(1,0)}&=\begin{pmatrix}-k_1^*e_0s+(sk_1^*-sk_3^*)c_1+sk_1^*c_2\\ k_1^*e_0s-(k_1^*s+k_3^*s)c_1-k_1^*sc_2\\k_3^*sc_1\end{pmatrix}\\
 g^{(1,1)}&=\begin{pmatrix}k_{-1}^*c_1\\-(k_{-1}^*+k_2^*)c_1\\0\end{pmatrix}.
\end{align*}
The intermediate reduced system is as follows:
\begin{align*}
 \dot s&=\frac{sc_1k_{-3}k_1^*+k_4k_1^*c_1s-sc_1k_3^*k_4-k_1^*e_0sk_{-3}-k_1^*e_0sk_4}{k_{-3}+k_4}\\
 \dot c_1&=-sc_1k_1^*+k_1^*e_0s\\
 \dot c_2&=0,
\end{align*}
and the completely reduced system (on $s=c_2=0$) is given by:
\[
 \dot c_1=\frac{-c_1(c_1k_{-3}k_1^*k_2^*-c_1k_{-1}^*k_3^*k_4+c_1k_1^*k_2^*k_4-c_1k_2^*k_3^*k_4-e_0k_{-3}k_1^*k_2^*-e_0k_1^*k_2^*k_4)}{c_1k_{-3}k_1^*+c_1k_1^*k_4-c_1k_3^*k_4-e_0k_1^*k_{-3}-e_0k_1^*k_4}
\]

\item In the case $k_2=k_3=k_4=0$ we have the one dimensional critical manifold
\[
 k_1s\cdot (e_0-c_1)-k_{-1}c_1=0,\ c_2=0
\]
and the right hand side of the system at $k_2=k_3=k_4=0$ can be decomposed into $P\cdot \mu$, with
\begin{align*}
 P&=\begin{pmatrix}1 & sk_1+k_{-3}\\ -1 & -sk_1+k_{-3}\\ 0 & -k_{-3}\end{pmatrix}\\
 \mu&=\begin{pmatrix}k_1s\cdot (e_0-c_1)-k_{-1}c_1 \\ c_2\end{pmatrix}.
\end{align*}
This yields
\[
\det(D\mu\cdot P)=(k_1(e_0-c_1)+k_1s+k_{-4})\cdot k_{-3}.
\]
We investigate the case $k_{-3}=0$. Additionally setting $k_{-3}=0$ we obtain the two dimensional critical manifold defined by 
\[
\mu_2:= -k_1e_0s+(k_{-1}+k_1s)c_1+k_1sc_2=0.
\]
Following the usual procedure we consider the surface
\[
 \gamma\left(\varepsilon_1,\varepsilon_2\right)=\begin{pmatrix}e_0\\k_1\\k_{-1}\\0\\0\\0\\0\end{pmatrix}+\varepsilon_1\cdot \left(\begin{pmatrix}0\\0\\0\\0\\0\\k_{-3}^*\\0\end{pmatrix}+\varepsilon_2\begin{pmatrix}0\\0\\0\\k_2^*\\k_3^*\\0\\k_4^*\end{pmatrix}\right)
\]
in parameter space, and thus
\begin{align*}
 g^{(0,0)}&=\begin{pmatrix}-k_1e_0s+(sk_1+k_{-1})c_1+k_1sc_2\\k_1e_0s-(sk_1+k_{-1})c_1+-k_1sc_2\\0\end{pmatrix}\\
 g^{(1,0)}&=\begin{pmatrix}k_{-3}^*c_2\\k_{-3}^*c_2\\-k_{-3}^*c_2\end{pmatrix}\\
 g^{(1,1)}&=\begin{pmatrix}-sk_3^*c_1\\ -(sk_3^*+k_2^*)c_1+k_4^*c_2\\ sk_3^*c_1-k_4^*c_2\end{pmatrix}.
\end{align*}
Here the intermediate reduced system is given by
\begin{align*}
\dot s&= \frac{1}{sk_1+k_1(e_0-c_1-c_2)+k_{-1}}\cdot \left(sc_2k_{-3}^*k_1+2c_2k_{-3}^*k_{-1}\right)\\
\dot c_1&=\frac{1}{sk_1+k_1(e_0-c_1-c_2)+k_{-1}}\cdot \left(sc_2k_{-3}^*k_1-2k_1c_2k_{-3}^*c_1-2k_1k_{-3}^*c_2^2+2e_0k_1k_{-3}^*c_2\right)\\
\dot c_2&=-c_2k_{-3}^*
\end{align*}
on $\mu_2=0$, and the fully reduced system is given by
\begin{align*}
\dot s&= \frac{1}{sk_1+k_1(e_0-c_1)+k_{-1}}\cdot \left(-se_0k_1k_2^*\right)\\
\dot c_1&=\frac{1}{sk_1+k_1(e_0-c_1)+k_{-1}}\cdot \left(k_1k_2^*c_1^2-k_2^*e_0k_1c_1\right)\\
\dot c_2&=0.
\end{align*}

\end{enumerate}
\section{More time scales}\label{secconc}
In this section we give a brief outline on extending the coordinate-free approach to more than three time scales. Thus let $N\geq 3$ and first consider a system with $N-1$ small parameters of the form
\begin{equation}\label{fullersys}
\dot x_i = \left(\prod_{1\leq j<i}\varepsilon_j\right)\cdot f_i(x,\varepsilon_1,\ldots,\varepsilon_{N-1}),\quad 1\leq i\leq N; \quad\text{  briefly  }\dot x=f(x,\varepsilon)
\end{equation}
with separated variables. By a smooth coordinate transformation this becomes
\begin{equation}\label{fullercofree}
\dot x=g^{(0,\ldots,0)}+\varepsilon_1\left( g^{(1,0,\ldots0)}+\varepsilon_2\left(g^{(1,1,0,\ldots,0)}+\varepsilon_3\left(\cdots\right)\right)\right)
\end{equation}
with the very last term in the embedded brackets being $\varepsilon_{N-1}g^{(1,\ldots,1)}$. Here all $g^{(i_1,\ldots,i_{N-1})}$ are functions of $(x,\varepsilon_1,\ldots,\varepsilon_{N-1})$.
Moreover conditions (i), (ii)  preceding Lemma \ref{decomplem} generalize in an obvious manner to the vanishing sets of
\[
\begin{array}{rcl}
g^{(0,\ldots,0)}& & \\
g^{(0,\ldots,0)}&+&\varepsilon_1 g^{(1,0,\ldots0)}\\
g^{(0,\ldots,0)}&+&\varepsilon_1\left( g^{(1,0,\ldots0)}+\varepsilon_2g^{(1,1,0,\ldots,0)}\right)\\
 &\text{etc.}&
\end{array}
\]
and as in Proposition \ref{cofreeredprop} one obtains decompositions
\[
\begin{array}{rcl}
g^{(0,\ldots,0)}&=&P_1\mu_1\\
g^{(0,\ldots,0)}+\varepsilon_1 g^{(1,0,\ldots0)}&=&\begin{pmatrix}P_1,&\varepsilon_1 P_2\end{pmatrix}\begin{pmatrix}\mu_1\\ \mu_2\end{pmatrix}\\
g^{(0,\ldots,0)}+\varepsilon_1\left( g^{(1,0,\ldots0)}+\varepsilon_2g^{(1,1,0,\ldots,0)}\right)&=&\begin{pmatrix}P_1,&\varepsilon_1 P_2,&\varepsilon_1\varepsilon_2P_3\end{pmatrix}\begin{pmatrix}\mu_1\\ \mu_2\\ \mu_3\end{pmatrix}\\
                    &\text{etc.}&
\end{array}
\]
Likewise, one generalizes the definitions of  $A_1,\,A_2$ and the constructions of the projection matrices $Q_j$, the latter extending smoothly to $\varepsilon_1=\cdots\varepsilon_{N-1}=0$. This yields the various (intermediate) reductions.\\

Given a general parameter dependent system \eqref{arbpar}, nested Tikhonov-Fenichel parameter values may be found via the ansatz
\[
\gamma:\,(\varepsilon_1,\ldots,\varepsilon_{N-1})\mapsto \widehat \pi+\varepsilon_1\left(\rho_1(x,\varepsilon_1)+\varepsilon_2\left(\rho_2(x,\varepsilon_1,\varepsilon_2)+\varepsilon_3\left(\cdots\right)\right)\right)
\]
and the ensuing decomposition of $h(x,\gamma(\varepsilon_1,\ldots,\varepsilon_{N-1}))$ analogous to the one in \eqref{hadam}. Thus the problem is to find $s>0$, $0<k_1<\cdots <k_{N-1}$ with $s+k_{N-1}<n$ and a smooth map 
\[
\beta:\,(\varepsilon_1,\ldots,\varepsilon_{N-2})\to \Pi,
\]
defined in some neighborhood of $0$, such that 
\[
\begin{array}{rcl}
\beta(\varepsilon_1,\ldots,\varepsilon_{N-2})&\in&\Pi_s\\
\beta(\varepsilon_1,\ldots,\varepsilon_{N-3},0)&\in&\Pi_{s+k_1}\\
                                      &\vdots&  \\
\beta(0,\ldots,0)&\in&\Pi_{s+k_{N-1}}\\
\end{array}
\]
whenever all $\varepsilon_j>0$. Rather obvious generalizations of Proposition \ref{intheboundary} hold, and the strategy outlined in Remark \ref{boundaryrem} remains applicable.
\section*{Appendix}
For the reader's convenience we state and prove here two lemmas.
\begin{lemma}\label{linsing}
Let $r_1,\,r_2$ be positive integers and
\[
A\in\mathbb R^{r_1\times r_1},\quad B\in \mathbb R^{r_1\times r_2},\quad C\in \mathbb R^{r_2\times r_1},\quad D\in \mathbb R^{r_2\times r_2}.
\]
Moreover assume that all eigenvalues of $A$ have real part $<0$. Then the following are equivalent.
\begin{enumerate}[(i)]
\item All eigenvalues of $-CA^{-1}B+D$ have real part $<0$.
\item There exists $\delta>0$ such that, for every $\varepsilon\in (0,\,\delta)$, all eigenvalues of
\[
\begin{pmatrix} A & B\\ \varepsilon C & \varepsilon D\end{pmatrix}
\]
have real part $<0$.
\end{enumerate}
\end{lemma}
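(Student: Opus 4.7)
The plan is to analyze the characteristic polynomial of $M(\varepsilon):=\begin{pmatrix} A & B \\ \varepsilon C & \varepsilon D \end{pmatrix}$ via the Schur complement and then separate the fast and slow eigenvalue branches by a singular rescaling. Since all eigenvalues of $A$ have negative real part, $A$ is invertible, and for any $\lambda$ outside $\mathrm{spec}(A)$ one has
\[
\det(\lambda I - M(\varepsilon)) = \det(\lambda I - A)\,\det\bigl((\lambda I - \varepsilon D) - \varepsilon C(\lambda I - A)^{-1} B\bigr).
\]
Substituting $\lambda = \varepsilon\tilde\lambda$ and extracting $\varepsilon^{r_2}$ from the second factor yields
\[
\det(\varepsilon\tilde\lambda I - M(\varepsilon)) = \varepsilon^{r_2}\det(\varepsilon\tilde\lambda I - A)\cdot q(\tilde\lambda,\varepsilon),
\]
where $q(\tilde\lambda,\varepsilon):=\det\bigl((\tilde\lambda I - D) - C(\varepsilon\tilde\lambda I - A)^{-1} B\bigr)$; specializing at $\varepsilon=0$ gives $q(\tilde\lambda,0)=\det(\tilde\lambda I - (-CA^{-1}B+D))$, a polynomial of degree $r_2$ whose roots are exactly the eigenvalues of $-CA^{-1}B+D$.

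Next I would argue that for small $\varepsilon>0$ the $n=r_1+r_2$ eigenvalues of $M(\varepsilon)$ separate into $r_1$ \emph{fast} branches $\lambda_i(\varepsilon)\to\mathrm{spec}(A)$ and $r_2$ \emph{slow} branches $\mu_j(\varepsilon)=O(\varepsilon)$, with rescaled versions $\tilde\mu_j(\varepsilon):=\mu_j(\varepsilon)/\varepsilon$ converging as $\varepsilon\to 0$ to the eigenvalues of $-CA^{-1}B+D$. The initial splitting comes from continuity of roots applied to $p_\varepsilon(\lambda):=\det(\lambda I - M(\varepsilon))$, since $p_0(\lambda)=\lambda^{r_2}\det(\lambda I-A)$ has $r_2$ roots at the origin and the remaining $r_1$ at $\mathrm{spec}(A)$. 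The convergence of the rescaled slow branches then follows by applying Rouch\'e's theorem to $q(\cdot,\varepsilon)$ on small discs around each eigenvalue of $-CA^{-1}B+D$, using that $q(\cdot,0)$ is a polynomial of degree $r_2$ with leading coefficient $1$ and that $q$ depends smoothly on $(\tilde\lambda,\varepsilon)$ in a neighborhood of any bounded disc in the $\tilde\lambda$-plane crossed with $\{0\}$.

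From this the equivalence reduces to a direct comparison of signs. For (i)$\Rightarrow$(ii): if every eigenvalue of $-CA^{-1}B+D$ has real part $\le -\theta<0$, then for all sufficiently small $\varepsilon$ each $\tilde\mu_j(\varepsilon)$ has real part $\le -\theta/2$, hence $\mathrm{Re}\,\mu_j(\varepsilon)\le -\varepsilon\theta/2<0$; the fast branches lie close to $\mathrm{spec}(A)\subset\{\mathrm{Re}<0\}$. Conversely, any eigenvalue $\nu$ of $-CA^{-1}B+D$ with $\mathrm{Re}\,\nu>0$ yields a branch $\tilde\mu_j(\varepsilon)\to\nu$ with positive real part for small $\varepsilon$, hence $\mathrm{Re}\,\mu_j(\varepsilon)>0$, contradicting (ii); the marginal case $\mathrm{Re}\,\nu=0$ is excluded by a separate examination of the subleading expansion of $q(\tilde\lambda,\varepsilon)$ near $\varepsilon=0$.

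The main obstacle I anticipate is precisely the continuous tracking of the slow branches at $\varepsilon=0$, since $\lambda=0$ is a (possibly defective) multiple root of $p_0$ and the naive implicit function theorem does not apply directly to $\mu_j(\varepsilon)$. The resolution is to work throughout with $q(\tilde\lambda,\varepsilon)$ rather than with $p_\varepsilon(\lambda)$: its leading coefficient in $\tilde\lambda$ equals $1$ at $\varepsilon=0$, so by smoothness the $r_2$ roots of $q(\cdot,\varepsilon)$ in any compact neighborhood of $\mathrm{spec}(-CA^{-1}B+D)$ vary continuously with $\varepsilon$, even though the corresponding unrescaled roots of $p_\varepsilon$ coalesce at $0$ as $\varepsilon\to 0$.
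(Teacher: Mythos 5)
Your Schur-complement factorization, the rescaling $\lambda=\varepsilon\tilde\lambda$, and the Rouch\'e argument on $q(\cdot,\varepsilon)$ are all correct, and they give a genuinely different (and more explicit) route than the paper, which instead puts the linear system $\dot x=Ax+By$, $\dot y=\varepsilon Cx+\varepsilon Dy$ into Tikhonov normal form via $z=x+A^{-1}By$ and appeals to Tikhonov's theorem. Your machinery cleanly yields (i)$\Rightarrow$(ii), and also that (ii) forces every eigenvalue of $-CA^{-1}B+D$ to have real part $\le 0$. The gap is exactly the step you deferred: the marginal case $\mathrm{Re}\,\nu=0$ is \emph{not} excluded by the subleading expansion, and no examination of it can close the argument, because the implication (ii)$\Rightarrow$(i) is false. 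Indeed your own expansion $q(\tilde\lambda,\varepsilon)=\det\bigl(\tilde\lambda I-E+\varepsilon\tilde\lambda\,CA^{-2}B+O(\varepsilon^2)\bigr)$ with $E:=-CA^{-1}B+D$ shows that a simple eigenvalue $i\omega$ of $E$ moves to $i\omega-\varepsilon i\omega\,u^{*}CA^{-2}Bv+O(\varepsilon^2)$, and the real part of this shift can be strictly negative, so the slow eigenvalue $\mu_j(\varepsilon)$ acquires real part $-c\varepsilon^2+O(\varepsilon^3)<0$ even though $\nu$ is purely imaginary.

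A concrete counterexample: take $r_1=1$, $r_2=2$ and
\[
A=-1,\qquad B=\begin{pmatrix}0&-2\end{pmatrix},\qquad C=\begin{pmatrix}1\\0\end{pmatrix},\qquad D=\begin{pmatrix}0&3\\-1&0\end{pmatrix}.
\]
Then $-CA^{-1}B+D=\begin{pmatrix}0&1\\-1&0\end{pmatrix}$ has eigenvalues $\pm i$, so (i) fails, while
\[
\det\left(\lambda I-\begin{pmatrix}A&B\\ \varepsilon C&\varepsilon D\end{pmatrix}\right)=\lambda^{3}+\lambda^{2}+3\varepsilon^{2}\lambda+\varepsilon^{2}
\]
satisfies the Routh--Hurwitz conditions ($1>0$, $\varepsilon^{2}>0$, $1\cdot 3\varepsilon^{2}>\varepsilon^{2}$) for every $\varepsilon\neq 0$; the slow roots are $\pm i\varepsilon-\varepsilon^{2}+O(\varepsilon^{3})$. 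So (ii) holds and the lemma as stated is false. (The paper's one-line appeal to Tikhonov's theorem does not close this direction either, so this is not a defect of your strategy alone.) What your argument actually proves, and what you should state, is the corrected version: (i) implies (ii), and (ii) implies that all eigenvalues of $-CA^{-1}B+D$ have real part $\le 0$; the strict implication (ii)$\Rightarrow$(i) should be dropped or replaced by an additional hypothesis ruling out purely imaginary eigenvalues of $-CA^{-1}B+D$.
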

\begin{proof}Consider the singularly perturbed linear differential equation
\[
\begin{array}{rcrcr}
\dot x&=& Ax&+&By\\
\dot y&=& \varepsilon Cx&+&\varepsilon Dy
\end{array}
\]
Introducing $z:=x+A^{-1}By$ one can rewrite this as
\[
\begin{array}{rcl}
\dot z &=& Az +\varepsilon (\cdots)\\
\dot y &=&\varepsilon\left((-CA^{-1}B+D) y+ Cz\right)
\end{array}
\]
Here the fast system is just given by $\dot z=Az$, and the slow system (on the critical manifold defined by $z=0$) is given by
\[
\dot y =\varepsilon(-CA^{-1}B+D) y.
\]
Using Tikhonov's theorem (in the form given e.g.\ in Verhulst \cite{verhulst}, Ch.~8), one sees that both conditions (i), (ii) are equivalent to exponential attractivity of the stationary point $0$ for the linear system.
\end{proof}
\begin{lemma}\label{projlem}
Let $V\subseteq \mathbb R^n$ be open and nonempty, $0<r<n$, $\delta>0$ and  
\[
\begin{array}{rll}
B_1: &V \times [0,\,\delta)\to \mathbb R^{n\times r},& (x,\varepsilon)\mapsto B_1(x,\varepsilon)\\
B_2: &V \times [0,\,\delta)\to \mathbb R^{n\times (n-r)},& (x,\varepsilon)\mapsto B_2(x,\varepsilon)\\
\end{array}
\]
be smooth functions (defined in some neighborhood of $V \times [0,\,\delta)$) such that $\mathbb R^n$ is the sum of the image $W_1$ of $B_1$ and the image $W_2$ of $B_2$,  for every $(x,\varepsilon)$. Then the entries of the matrix $Q(x,\varepsilon)\in\mathbb R^{n\times n}$ which sends $v\in\mathbb R^n$ to its $W_2$-component with respect to the direct sum decomposition $W_1\oplus W_2$ depend smoothly on $(x,\varepsilon)$. 
\end{lemma}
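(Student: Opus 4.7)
}
The plan is to reduce the smoothness of the projector $Q$ to the smoothness of matrix inversion by concatenating $B_1$ and $B_2$ into a single invertible square matrix. First I would form
\[
M(x,\varepsilon) := \begin{pmatrix} B_1(x,\varepsilon) & B_2(x,\varepsilon)\end{pmatrix}\in\mathbb R^{n\times n}.
\]
Since the hypothesis forces $\dim W_1\le r$, $\dim W_2\le n-r$ and $W_1+W_2=\mathbb R^n$, a dimension count gives $\dim W_1=r$, $\dim W_2=n-r$ and $W_1\cap W_2=0$; in particular the columns of $B_1$ and $B_2$ are linearly independent and $M(x,\varepsilon)$ is invertible for every $(x,\varepsilon)$.

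Next I would exploit the direct sum decomposition pointwise. For any $v\in\mathbb R^n$ there exist unique $a\in\mathbb R^r$, $b\in\mathbb R^{n-r}$ with $v=B_1 a+B_2 b$, equivalently $M(x,\varepsilon)\binom{a}{b}=v$, so
\[
\begin{pmatrix}a\\ b\end{pmatrix}=M(x,\varepsilon)^{-1}v,
\]
and the component of $v$ in $W_2$ is $B_2(x,\varepsilon)\,b$. Writing $\Pi_2:=\bigl(0_{(n-r)\times r}\ \ I_{n-r}\bigr)\in\mathbb R^{(n-r)\times n}$ for the constant projection onto the last $n-r$ coordinates, we thus obtain the closed-form expression
\[
Q(x,\varepsilon)=B_2(x,\varepsilon)\,\Pi_2\,M(x,\varepsilon)^{-1}.
\]

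Finally, since matrix inversion is a smooth map on the open subset $\mathrm{GL}(n,\mathbb R)\subseteq\mathbb R^{n\times n}$ (for instance by Cramer's rule, or equivalently because each entry of $M^{-1}$ is a rational function of the entries of $M$ with $\det M$ in the denominator, and $\det M\ne 0$ throughout $V\times[0,\delta)$), the composition $M\mapsto M^{-1}$ with the smooth map $(x,\varepsilon)\mapsto M(x,\varepsilon)$ is smooth. Multiplying on the left by the smooth $B_2(x,\varepsilon)$ and the constant $\Pi_2$ preserves smoothness, so the entries of $Q(x,\varepsilon)$ depend smoothly on $(x,\varepsilon)$, as required. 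There is no real obstacle here; the only thing one has to notice is that the direct-sum hypothesis guarantees the square matrix $M$ is globally invertible on $V\times[0,\delta)$, which is what makes the explicit formula for $Q$ available and smooth.
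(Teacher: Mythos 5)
Your proposal is correct and is essentially the paper's own argument: the paper also forms the square matrix $C=\begin{pmatrix}B_1,&B_2\end{pmatrix}$, notes it is invertible, and writes the projector as $Q=\begin{pmatrix}0&B_2\end{pmatrix}C^{-1}$, which is exactly your formula $B_2\,\Pi_2\,M^{-1}$, with smoothness following from smoothness of matrix inversion. Your added dimension count justifying invertibility of $M$ is a small but harmless elaboration of what the paper states as immediate.
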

\begin{proof}
We suppress the arguments $(x,\varepsilon)$ in the notation. By assumption $C:=\begin{pmatrix} B_1,&B_2\end{pmatrix}$ is invertible, and the entries of $C^{-1}$ depend smoothly on $(x,\varepsilon)$. With the projection matrix given by
\[
Q=\begin{pmatrix}0&B_2\end{pmatrix}C^{-1},
\]
the assertion is obvious.

\end{proof}

\noindent{\bf Acknowledgement.} The work of both authors has been supported by the bilateral project ANR-17-CE40-0036 and DFG-391322026 SYMBIONT. 

\end{document}